\DeclareMathOperator{\K}{K}
\DeclareMathOperator{\THH}{THH}
\DeclareMathOperator{\TR}{TR}
\DeclareMathOperator{\TP}{TP}
\DeclareMathOperator{\TC}{TC}
\DeclareMathOperator{\HH}{HH}
\DeclareMathOperator{\Calg}{CAlg}
\DeclareMathOperator{\spec}{Spec}
\DeclareMathOperator{\spf}{spf}
\DeclareMathOperator{\fib}{fib}
\DeclareMathOperator{\Fil}{Fil}
\DeclareMathOperator{\gr}{gr}
\DeclareMathOperator{\F}{F}
\DeclareMathOperator{\V}{V}
\DeclareMathOperator{\W}{W}
\newcommand{\FilM}{\Fil_{\mathcal{M}}}
\newcommand{\grM}{\gr_{\mathcal{M}}}
\newcommand{\Z}{\mathbb{Z}}
\newcommand{\Q}{\mathbb{Q}}
\newcommand{\Sph}{\mathbb{S}}
\newcommand{\Del}{\mathbb{\Delta}}
\newcommand{\Delc}{\widehat{\Del}}
\newcommand{\Zp}{\mathbb{Z}_p}
\theoremstyle{plain}
\newtheorem{thm}{Theorem}[section]
\newtheorem*{definition*}{Definition}
\newtheorem{prop}[thm]{Proposition}
\newtheorem{cor}[thm]{Corollary}
\theoremstyle{definition}
\newtheorem{defn}[thm]{Definition}
\newtheorem{con}[thm]{Construction}
\newtheorem{ex}[thm]{Example}
\DeclareMathOperator{\ainf}{\mathbb{A}_{\inf}}
\DeclareMathOperator{\einfty}{\mathbb{E}_{\infty}}
\DeclareMathOperator{\Res}{R}
\DeclareMathOperator{\can}{can}
\DeclareMathOperator{\limr}{\underset{\Res}{\lim}}
\DeclareMathOperator{\rlimr}{R\underset{\Res}{\lim}}
\newcommand{\nr}{\mathcal{N}_r}
\newcommand{\ninf}{\mathcal{N}_{\infty}}
\newcommand{\n}{\mathcal{N}}
\newcommand{\HT}{\mathrm{HT}}
\newcommand{\conj}{\mathrm{conj}}
\newcommand{\drw}{\mathrm{dRW}}
\title[TR with logarithmic poles and the de Rham--Witt complex]{TR with logarithmic poles and the de Rham--Witt complex}
\author{Faidon Andriopoulos}
\thanks{\textit{Email address:} \texttt{fandri@uchicago.edu}}
\begin{document}

\maketitle

\begin{abstract}
   In the article of Hesselholt \cite{hesselholt2005absolute}, a set of conjectures is laid out. Given a smooth scheme $X$ over the ring of integers $\mathcal{O}_K$ of a $p$-adic field $K$, these conjectures concern the expected relation between log topological restriction homology $\TR^r (X,M_X)$ and the absolute log de Rham--Witt complex $W_r\Omega_{(X,M_X)}$.
   In this note, which is companion to \cite{Andri1}, we discuss the case of a $p$-completely smooth $p$-adic formal scheme $X$ over $\spf \mathcal{O}_C$, where $C$ is the field of $p$-adic complex numbers.
   Along the way, we study the motivic filtration of log $\TR^r$ and its $S^1$-homotopy fixed points, following ideas of \cite{binda2023logarithmic}.
\end{abstract}

\tableofcontents


\section{Introduction}
Let $K$ be a $p$-adic field with ring of integers $\mathcal{O}_K$ and residue field $k$. Given a smooth scheme $X$ over $\mathcal{O}_K$, one can associate to it its special fibre $Y$ and its generic fibre $U$, which naturally fit in the following diagram:
\begin{equation*}
    \begin{tikzcd}[column sep=huge]
        Y \arrow[r, hook, "i"] \arrow[d] & X \arrow[d] & U \arrow[l, hook', "j"'] \arrow[d] \\
        \spec k \arrow[r, hook] & \spec \mathcal{O}_K & \spec K \arrow[l, hook']
    \end{tikzcd}
\end{equation*}
Under this setup, in \cite{hesselholt2005absolute} Hesselholt lays out a set of conjectures, which concern the expected relation between $\TR^r$ with logarithmic poles and the absolute de Rham--Witt complex associated to such a diagram.

In the current note, as a brief companion to \cite{Andri1}, we discuss a version of these conjectures in the case when $X$ is a $p$-completely smooth $p$-adic formal scheme over $\mathcal{O}_C$, where $C$ is an algebraically closed complete extension of $\Q_p$. We work under the additional assumption that the algebraic approach \cite{vcesnavivcius2019cohomology, koshikawa2020logarithmic, koshikawa2023logarithmic} and the homotopy theoretic approach \cite{binda2023logarithmic} to completed log prismatic cohomology over a perfectoid base coincide (as expected). Moreover, for matters of simplicity, from now on we treat the affine case and assume that $X = \spf S$, where $S$ is the $p$-adic completion of a smooth $\mathcal{O_C}$-algebra.

\begin{thm} \label{thm1 Hesselholt conjectures}
    Let $X= \spf S$ be a $p$-completely smooth, affine $p$-adic formal scheme over $\spf \mathcal{O}_C$, as above. Let $Y$ be its special fibre and $U$ its generic fibre. Note that $X$ obtains a canonical log structure $(X,M_X)$, via $U$. The following statements hold \'etale locally for $X$:
    \begin{enumerate}
        \item $\TR^r ((X,M_X);\Zp)$ is equipped with a complete, exhaustive, descending, multiplicative motivic filtration, whose graded pieces are \'etale locally identified with the graded pieces of the $r$-Nygaard filtered log prismatic cohomology (which is identified with the $A\Omega$-cohomology):
        \begin{equation*}
            \grM^n \TR^r ((-,-);\Zp) \simeq \nr^n \Delc_{(-,-)/ \ainf} \simeq \tau^{\leq n} A\Omega_{(-,-)}/ \widetilde{\xi}_r
        \end{equation*}
        Thus, they also give rise to a spectral sequence, which degenerates on the second page:
        \begin{equation*}
            E_{s,t}^2 = H^{t-s} \big( \nr^n \Delc_{(-,-)/ \ainf} \big) \Rightarrow \TR_{s+t}^r ((-,-);\Zp)
        \end{equation*}
        \item There exists a comparison for these objects with the $p$-complete, relative, log de Rham--Witt complex of Matsuue \cite{matsuue2017relative}:
        \begin{equation*}
            H^n \big( \nr^n \Del_{(-,-)/ \ainf} \big) \simeq W_r\Omega_{(-,-)/\mathcal{O}_C}^{n, \mathrm{cont}}
        \end{equation*}
        which also sit on the following Frobenius equalizer sequence:
        \begin{equation*}
            \begin{tikzcd}
                \tau^{\leq n} i^* j_* \mu_{p^v}^{\otimes n} \arrow[r] & i^* \ninf^n \Delc_{(-,-)/ \ainf} / p^v \arrow[r, "1-\F"] & i^* \ninf^n \Delc_{(-,-)/ \ainf} / p^v
            \end{tikzcd}
        \end{equation*}
    \end{enumerate}
\end{thm}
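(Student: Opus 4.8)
The plan is to deduce both parts by reducing to the log-quasisyntomic local situation and descending, in the spirit of \cite{binda2023logarithmic} and the companion paper \cite{Andri1}. Working on the log-quasisyntomic site of $(X,M_X)$, I would first reduce by descent to the case of a log-quasiregular semiperfectoid $\mathcal{O}_C$-algebra, where log $\THH$ and all of its fixed-point spectra $\TC^{-}$, $\TP$, and $\TR^r$ can be computed explicitly and are concentrated in even degrees. Over such a base the even (double-speed Postnikov) filtration on $\TR^r((-,-);\Zp)$ is well defined, and sheafifying it over the site produces the complete, exhaustive, multiplicative, descending filtration $\FilM^{\bullet}$ asserted in (1). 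The essential point is that local evenness, and hence the identification of $\grM^n \TR^r$ with a shifted log-quasisyntomic sheaf, is preserved under descent.

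To identify the graded pieces I would track the cyclotomic Frobenius through the computation of log $\THH$ over the semiperfectoid base. Under the standing assumption that the homotopy-theoretic and algebraic models of completed log prismatic cohomology agree (cf. \cite{koshikawa2020logarithmic, binda2023logarithmic}), the fixed-point towers $\TC^{-}$, $\TP$, $\TR^r$ are matched with the Nygaard-filtered, periodic, and $r$-truncated log prismatic complexes, and the class $\widetilde{\xi}_r$ appears precisely as the image of the $r$-fold cyclotomic Frobenius. After descent this yields $\grM^n \TR^r \simeq \nr^n \Delc_{(-,-)/\ainf} \simeq \tau^{\leq n} A\Omega_{(-,-)}/\widetilde{\xi}_r$, where the last equivalence is the log $A\Omega$-comparison over $\ainf$ (here the hypothesis that $C$ is algebraically closed makes the base prism $\ainf$ perfectoid and simplifies this step). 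The second-page degeneration is then forced by the local even structure: the graded pieces come from an even spectrum, so the motivic spectral sequence collapses, and this collapse descends.

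For part (2), the de Rham--Witt identification is obtained by extracting the top cohomology of the $r$-Nygaard piece. I would invoke the prismatic--de Rham--Witt comparison to identify $H^n\big(\nr^n \Del_{(-,-)/\ainf}\big)$ with degree-$n$ relative log de Rham--Witt forms, and then match these with Matsuue's $p$-complete relative complex $W_r\Omega^{n,\mathrm{cont}}_{(-,-)/\mathcal{O}_C}$ \cite{matsuue2017relative} via the universal property: both carry compatible Frobenius, Verschiebung, and restriction operators and agree on smooth affine log charts. The Frobenius equalizer sequence is then the log syntomic--étale comparison in disguise: the fibre of $1-\F$ acting on the Nygaard-complete ($r=\infty$) log prismatic cohomology modulo $p^v$ computes log syntomic cohomology, which in the displayed range of degrees is identified with the truncated $p$-adic nearby cycles $\tau^{\leq n} i^* j_* \mu_{p^v}^{\otimes n}$. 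I would deduce this from a logarithmic Fontaine--Messing / Colmez--Nizio\l{}-type comparison, reinterpreted prismatically following \cite{koshikawa2023logarithmic}.

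The step I expect to be the \emph{main obstacle} is the Frobenius equalizer sequence of part (2). It requires a genuinely logarithmic comparison between syntomic cohomology and $p$-adic nearby cycles, with careful control of the boundary along the special fibre $Y$ and of the log structure inherited from the generic fibre $U$, together with a verification that the Nygaard-complete object behaves correctly under $1-\F$ in the relevant truncation range. A second, structural difficulty underlying all the identifications of part (1) is the assumed equivalence between the homotopy-theoretic and algebraic constructions of completed log prismatic cohomology; any discrepancy there would have to be resolved to render the comparison unconditional.
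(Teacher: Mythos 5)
Your overall strategy for part (1) — descend to log quasiregular-semiperfectoid rings, take the double-speed Postnikov filtration there, and sheafify — is the same as the paper's Theorem \ref{thm2 motivic filtrations}. But there is a genuine gap where you write that ``local evenness \dots is preserved under descent'' and that the spectral sequence collapses because ``the graded pieces come from an even spectrum.'' For a log quasiregular-semiperfectoid $(S,Q)$ the invariants $\TC^-$, $\TP$, $\THH$, $\THH^{tC_{p^r}}$ are even, but $\TR^r((S,Q);\Zp)$ is an \emph{iterated pullback} of these, and the long exact sequence of the pullback produces potentially nonzero odd homotopy groups (cokernels of the maps to $\pi_{2n}\TP$). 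So the sheafified graded piece is a two-term complex $\tau_{[2n-1,2n]}\TR^r$, and the odd part does not vanish for formal reasons; the paper explicitly states it does not know how to kill these odd terms locally in the log quasisyntomic topology. The missing ingredient is Corollary \ref{Odd vanishing}: the odd parts vanish \emph{\'etale locally}, and this is proved not by descent but by comparing with the motivic filtration on algebraic $\K$-theory through the cyclotomic trace and the Hesselholt--Madsen localization sequence $\TR^r(Y) \to \TR^r(X) \to \TR^r((X,U))$, using that the filtration on $\K$-theory is locally the double-speed Postnikov one. Without this input neither the identification $\grM^n\TR^r \simeq \nr^n\Delc$ nor the second-page degeneration follows.

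For the de Rham--Witt identification your route (universal property of the $\F$-$\V$-procomplex, agreement on smooth charts) is essentially the paper's Proposition \ref{Log de Rham--Witt comparison}, which makes this precise via the d\'ecalage functor, the Beilinson $t$-structure, and the conjugate filtration on $r$-Hodge--Tate cohomology, following BMS1 \S 11 and Aoki. For the Frobenius equalizer sequence you take a genuinely different route: a logarithmic Fontaine--Messing/Colmez--Nizio\l{} comparison reinterpreted prismatically. The paper instead realizes the equalizer by forming log $\TC$ as the fibre of $1-\F$ on log $\TR$, feeds in the $\K$-theory localization sequence mapping to the log $\TC$ fibre sequence, and quotes the syntomic--\'etale comparison of Bhatt--Mathew and Bouis to identify the fibre with truncated \'etale Tate twists. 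Your route is plausible but carries the full weight of a logarithmic crystalline/syntomic comparison; the paper's is shorter because it outsources the hard comparison to existing prismatic results. You correctly flag the equalizer sequence as delicate, but the more serious unaddressed obstacle in your write-up is the odd vanishing above.
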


In order to tackle this, we first need to define and study the log versions of the $r$-nygaard filtered prismatic cohomology and the motivic filtration of $\TR^r$, at least over a fixed perfectoid base. Building on the previous work \cite{Andri1, andriopoulos2024motivic}, this is contained in the following theorem:

\begin{thm} \label{thm2 motivic filtrations}
    Let $(S,Q)$ be a $p$-complete quasisyntomic pre-log ring, over a fixed perfectoid ring $R_0$, with associated perfect prism $(\ainf (R_0), \xi)$. The following hold:
    \begin{enumerate}
        \item For $1 \leq r \leq \infty$, the invariants $\TR^r ((S,Q);\Zp)^{hS^1} \to \TR^r ((S,Q);\Zp)$ are equipped with complete, exhaustive, descending, multiplicative, $\Z$-indexed motivic filtrations:
        \begin{equation*}
            \FilM^{\bullet} \TR^r ((S,Q);\Zp)^{hS^1} \longrightarrow \FilM^{\bullet} \TR^r ((S,Q);\Zp)
        \end{equation*}
        which come from log quasisyntomic sheafification of their respective double-speed Postnikov filtrations, from the log quasiregular-semiperfectoid case. Passing to the associated graded pieces, these can be identified with:
        \begin{equation*}
            \begin{cases}
                \grM^n \TR^r ((S,Q);\Zp)^{hS^1} \simeq R\Gamma \Big( (S,Q), \tau_{[2n-1,2n]} \TR^r ((-,-);\Zp)^{hS^1} \Big) \\[5pt]
                \grM^n \TR^r ((S,Q);\Zp) \simeq R\Gamma \Big( (S,Q), \tau_{[2n-1,2n]} \TR^r ((-,-);\Zp) \Big)
            \end{cases}
        \end{equation*}

        \item We denote by $\grM^{n, \mathrm{odd}}$ (resp. $\grM^{n, \mathrm{even}}$) the information that comes by applying log quasisyntomic sheafification to the odd (resp. even) homotopy groups of either $\TR^r ((-,-);\Zp)^{hS^1}$ or $\TR^r ((-,-);\Zp)$. Then $\grM^{n, \mathrm{odd}}$ corresponds to the $0^{\mathrm{th}}$ cohomology group of the two term complex and $\grM^{n, \mathrm{even}}$ to the $1^{\mathrm{st}}$ cohomology group.

        Let us focus on the case of $1 \leq r < \infty$ and the even parts of the associated graded pieces of the motivic filtration. Note that in the non-logarithmic case, the odd parts vanish locally in the quasisyntomic topology. There exists a filtration on completed log prismatic cohomology, which we call the \emph{log $r$-Nygaard filtration} and is given by the following iterated pullback involving $r$ terms:
        \begin{gather*}
            \nr^{\geq n} \Delc_{(S,Q)/ \ainf} \{ n\} := \\
            \n^{\geq n} \Delc_{(S,Q)/ \ainf} \{ n\} \times_{\Delc_{(S,Q)/ \ainf} \{ n\}} \dots \times_{\Delc_{(S,Q)/ \ainf} \{ n\}} \n^{\geq n} \Delc_{(S,Q)/ \ainf} \{ n\}
        \end{gather*}
        This is equipped with natural Restriction, Frobenius, and Verschiebung operators. Its filtered and graded pieces arise via the homotopy theoretic machinery in the following manner. We consider the commutative square:
        \begin{equation*}
            \begin{tikzcd}[column sep=huge]
                \TR^r ((S,Q);\Zp)^{hS^1} \arrow[r] \arrow[d] & \TC^- ((S,Q);\Zp) \arrow[r, "\varphi^{hS^1}"] \arrow[d] & \TP ((S,Q);\Zp) \arrow[d] \\
                \TR^r ((S,Q);\Zp) \arrow[r] & \THH ((S,Q);\Zp)^{hC_{p^{r-1}}} \arrow[r, "\varphi^{hC_{p^{r-1}}}"] & \THH ((S,Q);\Zp)^{tC_{p^r}}
            \end{tikzcd}
        \end{equation*}
        Passing to the even terms of the associated graded pieces of the motivic filtrations, we obtain the following commutative square involving log $r$-Nygaard filtered prismatic cohomology:
        \begin{equation*}
            \begin{tikzcd}[column sep=huge]
                \nr^{\geq n} \Delc_{(S,Q)/ \ainf} \{ n\} [2n] \arrow[r, "\varphi_{r, n}"] \arrow[d] & \Delc_{(S,Q)/ \ainf} \{ n\} [2n] \arrow[d] \\
                \nr^n \Delc_{(S,Q)/ \ainf} \{ n\} [2n] \arrow[r, "\gr \varphi_{r, n}"] & \Del_{(S,Q)/ \ainf}^{\HT, r} \{ n\} [2n]
            \end{tikzcd}
        \end{equation*}
        where on the upper row we have a graded version of the $r^{\mathrm{th}}$ iteration of Frobenius mapping from the $r$-Nygaard filtered prismatic cohomology, while on the lower row we have its graded counterpart.
        
        \item Taking the limit with respect to the restriction maps gives rise to the following filtration:
        \begin{gather*}
            \ninf^{\geq n} \Delc_{(S,Q)/ \ainf} \{ n\} := \rlimr \nr^{\geq n} \Delc_{(S,Q)/ \ainf} \{ n\} \simeq \\
            \dots \times_{\Delc_{(S,Q)/ \ainf} \{ n\}} \n^{\geq n} \Delc_{(S,Q)/ \ainf} \{ n\} \times_{\Delc_{(S,Q)/ \ainf} \{ n\}} \dots \times_{\Delc_{(S,Q)/ \ainf} \{ n\}} \n^{\geq n} \Delc_{(S,Q)/ \ainf} \{ n\}
        \end{gather*}
        It follows that, passing to the even terms of the associated graded pieces of the motivic filtration, $\grM^{n, \mathrm{even}} \TR ((S,Q);\Zp)^{hS^1}$ can be interpreted in terms of $\limr \nr^{\geq n} \Delc_{(S,Q)/ \ainf} \{ n\}$ and $\grM^{n, \mathrm{even}} \TR ((S,Q); \Zp)$ can be interpreted in terms of $\limr \nr^n \Delc_{(S,Q)/ \ainf} \{ n\}$.
    \end{enumerate}
\end{thm}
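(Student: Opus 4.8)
The plan is to obtain everything in part (3) by passing to the derived limit over the restriction maps in the results already established in parts (1) and (2), using the defining presentation $\TR((S,Q);\Zp) \simeq \rlimr \TR^r((S,Q);\Zp)$ of topological restriction homology as the derived limit of its finite stages. Since the homotopy fixed point functor $(-)^{hS^1}$ is itself a limit, it commutes with $\rlimr$, so $\TR((S,Q);\Zp)^{hS^1} \simeq \rlimr (\TR^r((S,Q);\Zp))^{hS^1}$ as well. I would first construct the filtration $\ninf^{\geq n}$: starting from the finite iterated-pullback presentation of $\nr^{\geq n}\Delc_{(S,Q)/\ainf}\{n\}$ furnished by part (2), I would verify that the Restriction operators organize the family $\{\nr^{\geq n}\}_r$ into a tower whose transition maps forget the outermost factor of the fibre product over $\Delc_{(S,Q)/\ainf}\{n\}$. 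The derived limit $\rlimr$ then assembles this tower into the infinite iterated pullback displayed in the statement; this step is formal, being the computation of a derived limit of finite fibre products over a fixed base.

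The second step transports the motivic filtrations across the limit. By part (1) the filtrations $\FilM^\bullet$ on $\TR^r$ and on $(\TR^r)^{hS^1}$ are natural in $r$, so the Restriction maps lift to filtered maps, and one checks that the $r=\infty$ filtration agrees with $\rlimr \FilM^\bullet \TR^r$. The essential formal input is that the associated graded commutes with derived limits: in the stable setting $\grM^n \simeq \cofib(\FilM^{n+1}\to\FilM^n)$ sits in a fibre sequence, and $\rlimr$ preserves fibre sequences, so $\grM^n \TR \simeq \rlimr \grM^n \TR^r$ and likewise for the $S^1$-fixed points. Inserting the identifications of part (2), namely $\grM^{n,\mathrm{even}}(\TR^r)^{hS^1} \simeq \nr^{\geq n}\Delc_{(S,Q)/\ainf}\{n\}[2n]$ and $\grM^{n,\mathrm{even}}\TR^r \simeq \nr^n\Delc_{(S,Q)/\ainf}\{n\}[2n]$, the derived limit over $r$ yields $\ninf^{\geq n}$ and $\rlimr \nr^n$ respectively, giving the claimed interpretations of the even graded pieces in the limit.

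The main obstacle is the interaction between the even/odd splitting of the graded pieces and the limit, which is exactly where the logarithmic case diverges from the classical one. In the non-logarithmic setting the odd pieces vanish locally in the quasisyntomic topology, so the double-speed Postnikov filtration is genuinely even and the truncations commute with $\rlimr$ without correction; in the log case the odd pieces need not vanish, and by the Milnor sequence $0 \to {\limr}^1 \pi_{*+1}\TR^r \to \pi_*\TR \to \limr \pi_*\TR^r \to 0$ they contribute $\lim^1$-type terms to the homotopy of the limit. I would therefore separate the honest limit from its derived correction: the even part of $\grM^n\TR$ should be captured by the underived $\limr \nr^n$, while the ${\limr}^1$ corrections attach to the odd pieces $\grM^{n,\mathrm{odd}}$, and the full derived limit reassembles to $\ninf^{\geq n}$. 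Making this bookkeeping precise — confirming that $\grM^{n,\mathrm{even}}\TR$ is exactly $\limr \nr^n$ and that no even contribution is lost to $\lim^1$ — is the crux, and I expect it to reduce to a Mittag--Leffler/boundedness analysis of the Restriction tower on the relevant prismatic cohomology groups, using completeness of the filtrations to control convergence.
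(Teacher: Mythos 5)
There is a genuine gap: your proposal only engages with part (3) of the theorem and explicitly takes parts (1) and (2) as ``results already established,'' but those two parts are where essentially all of the work in the paper's proof lies. The paper's argument runs as follows: for $(S,Q)$ log quasiregular-semiperfectoid and $2 \leq r < \infty$, one first invokes the evenness of $\TC^-((S,Q);\Zp)$, $\TP((S,Q);\Zp)$, $\THH((S,Q);\Zp)$ and $\THH((S,Q);\Zp)^{tC_{p^r}}$ (from the log analogue of the Bhatt--Morrow--Scholze filtration), then uses the iterated pullback presentation of $\TR^r((S,Q);\Zp)^{hS^1}$ over $\Zp$-coefficients to deduce that its motivic filtration is the double-speed Postnikov filtration, extracts the long exact sequence of homotopy groups of that pullback, and \emph{recursively defines} the log $r$-Nygaard filtration $\nr^{\geq n}\Delc_{(S,Q)/\ainf}\{n\}$ as the resulting iterated fibre product of copies of $\n^{\geq n}\Delc_{(S,Q)/\ainf}\{n\}$ over $\Delc_{(S,Q)/\ainf}\{n\}$. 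The identification of the even homotopy groups of $\TR^r((S,Q);\Zp)$ itself (as opposed to its $S^1$-fixed points) with the graded terms $\nr^n$ then requires the base-change equivalence $\TR^r((S,Q);\Zp)^{hS^1}/v_r \simeq \TR^r((S,Q);\Zp)$ of Proposition \ref{Going back to TR^r}, and the general quasisyntomic case is obtained by log quasisyntomic sheafification from the quasiregular-semiperfectoid case. None of these steps appears in your proposal, so as a proof of the full statement it is circular.

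That said, your treatment of part (3) itself is sound and consistent with what the paper does: the transition maps of the tower $\{\nr^{\geq n}\}_r$ do forget the outermost factor of the fibre product, $\rlimr$ of the tower does assemble into the infinite iterated pullback, and the Milnor-sequence bookkeeping you describe --- even graded pieces captured by the underived $\limr$, with $\limr^1$ contributions landing in the odd pieces --- is exactly the pattern the paper records in the perfectoid case and implicitly carries over here. One caution on your final paragraph: the paper does \emph{not} resolve the odd pieces in the logarithmic setting (it states explicitly that the odd vanishing argument from the non-logarithmic case is not available and that the odd terms are simply ignored), so your proposed Mittag--Leffler analysis, while a reasonable thing to attempt, should not be presented as if it were known to close that gap.
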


The structure of this note is as follows. In section 2, we briefly gather some prerequisites regarding log rings and log versions of topological Hochschild homology and variants. In section 3, we describe the motivic filtration of log $\TR^r$ and of its $S^1$-homotopy fixed points, in terms of the $r$-Nygaard filtration on log prismatic cohomology over a perfectoid base. Finally, in section 4, after a brief discussion of the smooth case, we apply this to the setting of \ref{thm1 Hesselholt conjectures}.

\subsection*{Acknowledgements}
The author would like to thank Prof. Akhil Mathew, for informing him about the existence of the article \cite{hesselholt2005absolute}, as well as for helpful conversations.

\section{Preliminaries}

In this section we briefly recall some facts about log rings and the log cotangent complex, the log quasisyntomic topology and how it is used to construct the motivic filtration on log $\THH$ and its variants. For this we rely on \cite{binda2023logarithmic}. In addition, we recall the definition of the $p$-complete, relative de Rham--Witt complex of Langer--Zink, in the log setting, as this was presented by \cite{matsuue2017relative}.

\subsection{Log rings}


\begin{defn}[Log rings] The following are some of the most important facts about log rings.
    \begin{enumerate}[1)]
        \item A \emph{pre-log ring} is a triple $(A, M, \alpha)$, comprising of a ring $A$, a commutative monoid $M$, and a map $\alpha : P \to R$ to the monoid associated to multiplication on $R$
        \item This is a \emph{log-ring} if $\alpha^{-1}(A^{\times}) \to A^{\times}$ is an isomorphism.
        \item A morphism between pre-log rings comprises of a map between rings and a map between monoids, so that the relevant structure commutes.
    \end{enumerate}
\end{defn}

Via the process of animation, one is also able to construct a good notion of \emph{animated (pre-) log rings}.

Since we plan to use differential forms in their various incarnations, a good notion of flatness is required.

\begin{defn}[flatness \cite{binda2023logarithmic}]
    For a map of prelog rings $(f, f^{\flat}): (A,M) \to (B,N)$, we have that:
    \begin{enumerate}
        \item $f^{\flat}$ is \emph{flat} if the map $N \oplus_M N' \to \pi_0 (N \oplus_M N')$, for all monoid maps $M \to N'$.
        \item $(f, f^{\flat})$ is \emph{homologically log flat} if both $f$ and $f^{\flat}$ are flat.
        \item $(f, f^{\flat})$ is \emph{homologically faithfully flat}, if in addition to the above conditions, $f$ is faithfully flat.
    \end{enumerate}
\end{defn}

One of the main algebraic objects that have good generalizations in the logarithmic world are de Rham differentials.

\begin{defn}[de Rham differentials]
    Let $(A,M) \to (B,N)$ be a map of pre-log rings.
    \begin{enumerate}[1)]
        \item One can define the \emph{module of relative log differentials} as the $B$-module obtained from the quotient of $\Omega_{B/A}^1 \oplus (B \otimes_{\Z} \mathrm{coker} (M^{\mathrm{gp}} \to N^{\mathrm{gp}})$ by the submodule generated by $(\mathrm{d}\beta(n),0) - (0,\beta(n)\otimes n)$, $n \in N$.
        \item The process of animation provides a good notion of log cotangent complex for a map of animated pre-log rings. This has been studied by Gabber and Olsson \cite{olsson2005logarithmic}.
        \item Using wedge powers one can have good notions of the de Rham complex and the derived de Rham complex, equipped with the Hodge filtration, in the logarithmic setting. Following \cite{BMS2}, all these aforementioned gadgets satisfy homologically log flat descent.
    \end{enumerate}
\end{defn}

In the case of $p$-complete rings, a good extension of the Hodge-filtered de Rham complex has been studied by Langer--Zink. Such an object has a good analogue in the logarithmic setting as well. These are actually true more generally in the $p$-typical setting, but for our considerations, the $p$-complete hypothesis suffices.

\begin{defn}[Log de Rham--Witt complex, after Matsuue \cite{matsuue2017relative}]
    Let $(A,M) \to (B,N)$ be a map of $p$-complete pre-log rings.
    \begin{enumerate}[1)]
        \item There exists a \emph{relative log de Rham--Witt complex} $W_r \Omega_{(B,N)/(A,M)}$, serving as an analogue of the relative de Rham--Witt complex of Langer--Zink, in the logarithmic world. This has been introduced as the universal log $\F$-$\V$-procomplex, by Matsuue.
        \item Using animation, it is possible to extend this to a relative log derived de Rham--Witt complex $\drw_{r, (B,N)/(A,M)}$, for a pair of $p$-complete animated pre-log rings $(A,M) \to (B,N)$.
    \end{enumerate}
\end{defn}

Finally, in order to be able to study the motivic filtrations on variants of $\THH$, we need to also have a good notion of quasisyntomic rings in the logarithmic world. We follow \cite{binda2023logarithmic}, again.

\begin{defn}[Log quasisyntomic \cite{binda2023logarithmic}]
    Consider an integral map of pre-log rings with bounded $p^{\infty}$-torsion. Then:
    \begin{enumerate}[1)]
        \item $(f, f^{\flat})$ is \emph{log quasismooth} (resp. \emph{log quasismooth cover}) if $A \to B$ is $p$-completely flat (resp. $p$-completely faithfully flat) and $\mathbb{L}_{(B,N) / (A,M)}$ is $p$-completely flat.
        \item $(f, f^{\flat})$ is \emph{log quasisyntomic} (resp. \emph{log quasisyntomic cover}) if $A \to B$ is $p$-completely flat (resp. $p$-completely faithfully flat) and $\mathbb{L}_{(B,N) / (A,M)}$ has $p$-complete Tor-amplitude in $[-1,0]$.
    \end{enumerate}
\end{defn}

The main examples of log quasisyntomic rings are perfectoid and log quasiregular-semiperfectoid rings.

\begin{ex}[Log quasiregular-semiperfectoid rings]
    An integral pre-log ring $(A,M)$ is called \emph{log quasiregular-semiperfectoid} if the following hold:
    \begin{enumerate}[a)]
        \item $(A,M)$ is log quasisyntomic.
        \item There exists a map $R_0 \to A$, from a perfectoid ring $R_0$.
        \item $A/p$ and $M$ are both semiperfect.
    \end{enumerate}
    Note that it is possible to also define log perfectoid rings, but for our purposes, following \cite{binda2023logarithmic}, we consider perfectoid rings with trivial log structure.
\end{ex}

The important thing about log quasiregular-semiperfectoid rings is that they are amenable for calculations, but are also the foundation for applying descent.

\begin{prop}[Log quasisyntomic descent \cite{binda2023logarithmic}]
    Log quasiregular-semiperfectoid rings, equipped with log quasisyntomic covers, provide a basis for the log quasisyntomic topology.
\end{prop}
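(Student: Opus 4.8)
The statement is the logarithmic counterpart of the theorem of Bhatt--Morrow--Scholze that ordinary quasiregular-semiperfectoid rings form a basis for the quasisyntomic topology, so the plan is to adapt that argument, keeping careful track of the extra monoid data. Recall that calling a full subcategory a \emph{basis} amounts to two assertions: first, that log qrsp rings are themselves log quasisyntomic and are stable under the pushouts appearing in covering families; and second, that every log quasisyntomic pre-log ring $(S,Q)$ admits a log quasisyntomic cover $(S,Q) \to (S',Q')$ with $(S',Q')$ log quasiregular-semiperfectoid. Granting these, log quasisyntomic descent (which holds by the homologically log flat descent recorded above) lets one compute any sheaf on the big site from its restriction to the qrsp objects.

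First I would construct the cover. Choose a perfectoid ring $R_0$ mapping to $S$, e.g. obtained from a $p$-completed polynomial presentation of $S$ by adjoining all $p$-power roots of the generators, and work relative to it. On the ring level this already yields a $p$-completely faithfully flat map $S \to S'$ with $S'/p$ semiperfect. On the monoid level I would replace $Q$ by its $p$-divisible hull $Q^{1/p^\infty} := \colim(Q \xrightarrow{p} Q \xrightarrow{p} \cdots)$, whose Frobenius is surjective so that $Q^{1/p^\infty}$ is semiperfect, and arrange $S'$ to contain compatible $p$-power roots of the images $\beta(q)$ of the monoid generators. This produces a pre-log ring $(S', Q^{1/p^\infty})$ under $(S,Q)$ with $S'/p$ and $Q^{1/p^\infty}$ both semiperfect, giving conditions (b) and (c) of the definition of log qrsp.

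The technical heart is verifying that this map is a log quasisyntomic \emph{cover}, i.e. that $S \to S'$ is $p$-completely faithfully flat and that $\mathbb{L}_{(S',Q^{1/p^\infty})/(S,Q)}$ has $p$-complete Tor-amplitude in $[-1,0]$. Faithful flatness on the ring side is immediate from the perfectoid cover. For the cotangent complex I would use the transitivity triangle together with the splitting of the relative log cotangent complex into its ordinary module-of-differentials part and the monoid contribution governed by $S' \otimes_{\Z} \big((Q^{1/p^\infty})^{\mathrm{gp}}/Q^{\mathrm{gp}}\big)$; by the flatness criterion for monoid maps recorded above, adjoining $p$-power roots is a homologically log flat operation, so the monoid term is flat and the whole relative complex remains in amplitude $[-1,0]$.

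Finally I would check stability under the fiber products of covers: given log qrsp covers $(S,Q) \to (S_i,Q_i)$ of a log qrsp ring, their pushout has underlying ring $S_1 \widehat{\otimes}_S S_2$ and monoid $Q_1 \oplus_Q Q_2$, and one verifies that semiperfectness of both rings and monoids is preserved and that the log quasisyntomic conditions propagate through the pushout. The main obstacle I anticipate is exactly this monoid bookkeeping: ensuring the integral and flat hypotheses force $Q_1 \oplus_Q Q_2$ to coincide with $\pi_0(Q_1 \oplus_Q Q_2)$ (so that no higher homotopy is introduced into the pushout) and controlling the monoid part of the log cotangent complex across the amalgamation. This is the one point where the purely ring-theoretic BMS2 argument must genuinely be upgraded, rather than merely transcribed, to the logarithmic setting.
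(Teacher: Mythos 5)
The paper does not prove this proposition; it is imported verbatim from \cite{binda2023logarithmic}, so there is no in-paper argument to compare against. Your reconstruction follows the route one would expect (and the route the cited reference in fact takes): adapt the Bhatt--Morrow--Scholze argument by producing, for any log quasisyntomic $(S,Q)$, a cover obtained by adjoining $p$-power roots of polynomial generators on the ring side and passing to the $p$-divisible hull $Q^{1/p^\infty}$ on the monoid side, then checking the quasisyntomic-cover condition via the transitivity triangle and closure of log qrsp objects under the pushouts occurring in covering families. The outline is essentially correct. One point of wording to tighten: the monoid contribution to $\mathbb{L}_{(S',Q^{1/p^\infty})/(S,Q)}$ is controlled by $S'\otimes_{\Z}\mathrm{coker}\big(Q^{\mathrm{gp}}\to (Q^{1/p^\infty})^{\mathrm{gp}}\big)$, and this cokernel is $p$-divisible torsion, so after derived $p$-completion it is concentrated in cohomological degree $-1$ rather than being ``flat'' in degree $0$; the conclusion that the relative log cotangent complex has $p$-complete Tor-amplitude in $[-1,0]$ is still what you want, but the justification is the shift under $p$-completion, not flatness of that term. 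You should also make explicit that the ring-level cover is the $p$-completed base change of $S$ along $\Zp\langle x_s\rangle\to\Zp\langle x_s^{1/p^\infty}\rangle$ (over a perfectoid base such as $\Zp^{\mathrm{cycl}}$) rather than describing the perfectoid ring $R_0$ itself as obtained from a presentation of $S$; as written, the roles of the base perfectoid and the qrsp cover are conflated.
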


\subsection{Variants of $\THH$ with log poles}



In order to define topological Hochschild homology with logarithmic poles, the procedure of repletion is required.

\begin{defn}
    Consider a map of pre-log rings $(A,M) \to (B,N)$, such that the induced map $M^{\mathrm{gp}} \to N^{\mathrm{gp}}$ is surjective.
    \begin{enumerate}
        \item The \emph{repletion} of $M$ is the output of the following pullback:
        \begin{equation*}
            \begin{tikzcd}[column sep=huge]
                M^{\mathrm{rep}} \arrow[r] \arrow[d] & N \arrow[d] \\
                M^{\mathrm{gp}} \arrow[r] & N^{\mathrm{gp}}
            \end{tikzcd}
        \end{equation*}
        \item The \emph{replete base change} is the construction $A^{\mathrm{rep}}:= A \otimes_{\Z [M]} \Z[M^{\mathrm{rep}}]$
    \end{enumerate}
\end{defn}

We can now define topological Hochschild homology with logarithmic poles. By a \emph{pre-log $\einfty$-ring spectrum} we mean a pair $(A,M)$, comprising of an $\einfty$-ring spectrum and a commutative monoid, together with a map $\Sph [M] \to A$, in $\Calg$.

\begin{defn}[log $\THH$, after \cite{rognes2009topological, rogneslog, binda2023logarithmic}]
    Let $(A,M)$ be a pre-log $\einfty$-ring spectrum. We define \emph{topological Hochschild homology with logarithmic poles} (or, for short, \emph{log $\THH$}) to be the following $\einfty$-ring spectrum:
    \begin{equation*}
        \THH (A,M) := \THH (A) \otimes_{\Sph [B^{\mathrm{cyc}}(M)]} \Sph [M \times_{M^{\mathrm{gp}}} B^{\mathrm{cyc}}(M^{\mathrm{gp}})]
    \end{equation*}
    Equivalently, this can be presented as:
    \begin{equation*}
        \THH (A,M) \simeq A \otimes_{A \otimes A \otimes_{\Sph [M \oplus M]} \Sph [(M \oplus M)^{\mathrm{rep}}] } A
    \end{equation*}
\end{defn}

\begin{prop}[Cyclotomic structure]
    Let $(A,M)$ be a pre-log $\einfty$-ring spectrum. Then $\THH (A,M)$ is a cyclotomic spectrum:
    \begin{enumerate}[a)]
        \item It admits a canonical $S^1$-action, via the simplicial presentation of the circle: $S^1 \simeq * \amalg_{* \amalg *} *$
        \item For all primes $p$, there exist $S^1$-equivariant higher Frobenii maps, using the repletion induced Frobenius:
        \begin{equation*}
            \begin{tikzcd}
                \THH (A) \otimes_{\Sph [B^{\mathrm{cyc}} (M)] } \Sph [B^{\mathrm{rep}} (M)] \arrow[r] \arrow[dr, "\varphi_p"'] & \THH (A)^{tC_p} \otimes_{\Sph [B^{\mathrm{cyc}} (M)]^{tC_p}} \Sph[B^{\mathrm{rep}}(M)]^{tC_p} \arrow[d] \\
                & \THH (A,M)^{tC_p}
            \end{tikzcd}
        \end{equation*}
    \end{enumerate}
    Also, note that if $A$ is a connective $\einfty$-ring spectrum, then $\THH (A,M)$ is a connective $\einfty$-ring spectrum as well.
\end{prop}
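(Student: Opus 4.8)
The plan is to build the cyclotomic structure on $\THH(A,M)$ one factor at a time, exploiting the fact that each of the three pieces appearing in the relative tensor product
\[
    \THH(A,M) \simeq \THH(A) \otimes_{\Sph[B^{\mathrm{cyc}}(M)]} \Sph[B^{\mathrm{rep}}(M)], \qquad B^{\mathrm{rep}}(M) := M \times_{M^{\mathrm{gp}}} B^{\mathrm{cyc}}(M^{\mathrm{gp}}),
\]
already carries a cyclotomic structure, and that the two structure maps are maps of cyclotomic spectra. I would work throughout in the $\infty$-category $\Cycsp$ of cyclotomic spectra in the sense of Nikolaus--Scholze, in which the relevant colimits (in particular relative tensor products of $\einfty$-algebras) can be formed and computed, so that the entire construction reduces to assembling known cyclotomic data along $S^1$-equivariant, Frobenius-compatible maps.

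For part (a), I would first recall that $\THH(A)$ acquires its canonical $S^1$-action from the cyclic structure on the cyclic bar construction, the realization of any cyclic object carrying an $S^1$-action via the simplicial model $S^1 \simeq * \amalg_{* \amalg *} *$ of the circle. The identical mechanism applies to the monoid factors: $B^{\mathrm{cyc}}(M)$ and $B^{\mathrm{cyc}}(M^{\mathrm{gp}})$ are cyclic commutative monoids, so $\Sph[B^{\mathrm{cyc}}(M)]$ and $\Sph[B^{\mathrm{cyc}}(M^{\mathrm{gp}})]$ are $\einfty$-ring spectra with $S^1$-action, and since the pullback defining $B^{\mathrm{rep}}(M)$ is formed in cyclic commutative monoids, $\Sph[B^{\mathrm{rep}}(M)]$ inherits one as well. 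The structure maps $\Sph[B^{\mathrm{cyc}}(M)] \to \THH(A)$ and $\Sph[B^{\mathrm{cyc}}(M)] \to \Sph[B^{\mathrm{rep}}(M)]$ are induced by maps of cyclic objects, hence $S^1$-equivariant, so the relative tensor product may be formed in $\einfty$-algebras with $S^1$-action, endowing $\THH(A,M)$ with the desired action.

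For part (b), the key input is the Frobenius. Both $\THH(A)$ and $\Sph[B^{\mathrm{cyc}}(M)]$ carry their cyclotomic Frobenii $\varphi_p$, and the point specific to the logarithmic setting is that the non-group-like monoid $M$ does not by itself admit a Frobenius landing in the relevant Tate construction; this is exactly the role of repletion. I would use that for the group $M^{\mathrm{gp}}$ the cyclic bar construction splits as $B^{\mathrm{cyc}}(M^{\mathrm{gp}}) \simeq M^{\mathrm{gp}} \times BM^{\mathrm{gp}}$, so that its cyclotomic Frobenius is explicitly understood via the $p$-th power diagonal, and then transport this along the repletion pullback to obtain a Frobenius on $\Sph[B^{\mathrm{rep}}(M)]$. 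The diagram in the statement records precisely the resulting factorization: using that $(-)^{tC_p}$ is lax symmetric monoidal, there is a comparison map from the relative tensor product of the Tate constructions to $\THH(A,M)^{tC_p}$, and composing the tensor of the three Frobenii with this comparison yields the diagonal map $\varphi_p : \THH(A,M) \to \THH(A,M)^{tC_p}$, which is $S^1$-equivariant because each constituent Frobenius is.

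The main obstacle is the Frobenius on the repletion factor: one must check that the Frobenius on $\Sph[B^{\mathrm{cyc}}(M^{\mathrm{gp}})]$ descends along the pullback square defining $B^{\mathrm{rep}}(M)$ to a well-defined $S^1$-equivariant map into the correct Tate construction, and that the displayed triangle commutes coherently rather than merely on homotopy groups. This amounts to verifying that repletion is compatible with the cyclotomic structure, which I would establish by appealing to the framework of \cite{binda2023logarithmic, rogneslog, rognes2009topological}, where the repleted monoid ring is characterized as the universal construction supporting a Frobenius; commutativity of the triangle then follows from this universal property together with lax monoidality of the Tate construction. Finally, connectivity of $\THH(A,M)$ when $A$ is connective is immediate, since $\THH(A)$ and all the monoid-ring spectra $\Sph[-]$ are then connective, the structure maps are connective, and a relative tensor product of connective $\einfty$-rings along connective maps is connective.
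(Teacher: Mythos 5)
Your proposal is correct and takes essentially the same route as the paper, which states this proposition without an independent proof as a recollection from \cite{rognes2009topological, rogneslog, binda2023logarithmic}: the $S^1$-action comes factorwise from the cyclic/tensoring structure on $\THH(A)$, $\Sph[B^{\mathrm{cyc}}(M)]$ and the replete factor, and $\varphi_p$ is assembled from the factorwise cyclotomic Frobenii via lax symmetric monoidality of $(-)^{tC_p}$, exactly as your diagram factorization records. The only soft spot is your appeal to a ``universal property of repletion supporting a Frobenius'' --- in the cited sources the Frobenius on $\Sph[B^{\mathrm{rep}}(M)]$ is constructed explicitly from the splitting $B^{\mathrm{cyc}}(M^{\mathrm{gp}}) \simeq M^{\mathrm{gp}} \times BM^{\mathrm{gp}}$ (equivalently, from the identification of the $p$-fold edgewise subdivision fixed points of the repletion pullback) rather than deduced from a universal property, but since you also invoke that splitting this is a presentational wrinkle rather than a gap.
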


Replacing the sphere spectrum with the integers and the world of $\einfty$-ring spectra with that of (animated) rings gives rise to plain Hochschild homology.

\begin{defn}
    Let $(A,M)$ be an (animated) pre-log ring. Then its \emph{Hochschild homology with logarithmic poles} (or, for short, \emph{log $\HH$}) is the following (animated) ring, equipped with an $S^1$-action:
    \begin{align*}
        \HH (A,M) := A \otimes_{(A \otimes A)^{\mathrm{rep}}} A \\
        \simeq \THH (A,M) \otimes_{\THH (\Z)} \Z
    \end{align*}
\end{defn}


\begin{prop}
    Given the fact that for a pre-log $\einfty$-ring spectrum (A,M), $\THH (A,M)$ is a cyclotomic spectrum, we can define as usual:
    \begin{equation*}
        \begin{tikzcd}[row sep=tiny]
            \TC^- (A,M) := \THH (A,M)^{hS^1} \quad \TP (A,M) := \THH (A,M)^{tS^1} \\
            \TC ((A,M);\Zp) := \mathrm{map}_{\mathrm{CycSp}} (\Sph, \THH ((A,M);\Zp)) \simeq \fib \Big( \can - \varphi_p^{hS^1} : \TC^- ((A,M);\Zp) \longrightarrow \TP((A,M);\Zp) \Big)
        \end{tikzcd}
    \end{equation*}
    The spectra $\THH (-,-)$, $\THH (-,-)_{hC_n}$, $\THH (-,-)^{hC_n}$, $\THH (-,-)^{tC_n}$, $\THH (-,-)_{hS^1}$, $\TC^- (-,-)$, $\TP (-,-)$, $\TC (-,-)$ all satisfy descent for the homologically log flat topology.
\end{prop}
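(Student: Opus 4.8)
The strategy is to isolate the one genuinely geometric input --- homologically log flat descent for $\THH(-,-)$ itself --- and to obtain every other functor from it by formal operations (limits, cofibres of norm maps, and fibres). For the base case I would invoke the presentation $\THH(A,M)\simeq A\otimes_{(A\otimes A)^{\mathrm{rep}}}A$ together with the connectivity statement recorded above, namely that $\THH(A,M)$ is connective whenever $A$ is. Given a homologically log flat cover $(A,M)\to(A',M')$, the underlying ring map is faithfully flat and the repletion is stable under base change, so the \v{C}ech nerve of the cover recovers $\THH(A,M)$; this is the flat descent for log $\THH$ proved in \cite{binda2023logarithmic}, and I take it as the starting point.

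The fixed-point functors are then immediate: for $G\in\{C_n,S^1\}$ one has $(-)^{hG}=\lim_{BG}(-)$, and limits commute with the cosimplicial totalisation computing descent, so $\THH(-,-)^{hC_n}$ and $\TC^-(-,-)=\THH(-,-)^{hS^1}$ are sheaves. Moreover $\TC(-,-)=\fib\big(\can-\varphi_p^{hS^1}\colon\TC^-(-,-)\to\TP(-,-)\big)$ is a finite limit, hence a sheaf once $\TC^-$ and $\TP$ are. Finally, the \emph{norm cofibre sequence} $\THH(-,-)_{hG}\xrightarrow{\Nm}\THH(-,-)^{hG}\to\THH(-,-)^{tG}$ lets me trade the homotopy orbits for a fibre: once the Tate term $\THH(-,-)^{tG}$ is known to be a sheaf, $\THH(-,-)_{hG}=\fib\big(\THH(-,-)^{hG}\to\THH(-,-)^{tG}\big)$ is a sheaf as well. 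Thus the whole proposition reduces to the sheaf property of the two Tate constructions $\THH(-,-)^{tC_n}$ and $\TP(-,-)=\THH(-,-)^{tS^1}$.

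This is where the real work lies, since the Tate construction is a cofibre of the norm and an a priori colimit need not commute with the descent limit. To treat it I would restrict to the basis of log quasiregular-semiperfectoid rings furnished by the descent proposition above, on which the log analogue (from \cite{binda2023logarithmic}) of the evenness theorem of \cite{BMS2} applies: $\THH((A,M);\Zp)$ is concentrated in even degrees. On such a basis the Tate spectral sequences $\hat{H}^{\ast}(G;\pi_\ast\THH)\Rightarrow\pi_\ast\THH^{tG}$ (for $G=S^1$ and $G=C_n$) are governed by this evenness and degenerate, so that the homotopy groups of $\THH^{tG}$ are computed functorially from $\pi_\ast\THH$. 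Since these homotopy-group functors already satisfy flat descent with vanishing higher \v{C}ech cohomology on the basis, they assemble into sheaves, and the Tate constructions inherit the sheaf property. Feeding this back through the reductions of the previous paragraph yields the claim for the homotopy orbits, for $\TP$, and for $\TC$.

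The main obstacle, as indicated, is precisely the non-commutation of the Tate construction (and the homotopy orbits) with the totalisation expressing descent. The two devices that defuse it are the norm cofibre sequence, which re-expresses the orbits as a fibre and so reduces them to fixed points and Tate, and the evenness of log $\THH$ on the log quasiregular-semiperfectoid basis, which forces the Tate and orbit spectral sequences to degenerate and identifies their homotopy-group sheaves with flat-descending functors of $\pi_\ast\THH$. I expect the technical heart to be the verification of this degeneration together with the flat descent of the individual homotopy-group sheaves on the basis --- the remaining steps being formal consequences of the stability of sheaves under limits and cofibres.
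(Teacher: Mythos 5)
The paper does not actually prove this proposition: it is recalled verbatim from \cite{binda2023logarithmic}, where the argument follows the template of \cite{BMS2}. Measured against that standard argument, your proposal has the right skeleton (descent for $\THH(-,-)$ itself as the geometric input, closure of sheaves under limits for the fixed points and for $\TC$, and the norm cofibre sequence to relate orbits and Tate), but the step that carries the real content points in the wrong direction and has a genuine gap.

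The standard proof handles the \emph{homotopy orbits} first: for connective $A$ the spectrum $\THH(A,M)$ is connective, and the skeletal filtration of $EG$ exhibits $\THH(-,-)_{hG}$ as an exhaustive colimit whose associated graded pieces are shifts of $\THH(-,-)$ of strictly increasing connectivity; in any fixed range of degrees the colimit is therefore eventually constant and commutes with the \v{C}ech totalisation, so the orbits inherit descent from $\THH$. The Tate constructions are then the \emph{cofibres} of the norm maps, hence sheaves. You instead try to prove descent for $\THH(-,-)^{tG}$ directly and deduce the orbits from it. Two problems. First, your argument for the Tate term is out of scope: the proposition asserts descent for the \emph{homologically log flat topology} on general pre-log rings, with no $p$-completion and no perfectoid base in sight, so there is no log quasiregular-semiperfectoid basis and no evenness theorem to invoke; those only become available later, for the $p$-complete log quasisyntomic site over a perfectoid ring. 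Second, even in that restricted setting, the assertion that ``these homotopy-group functors already satisfy flat descent with vanishing higher \v{C}ech cohomology on the basis'' is precisely the nontrivial content (it amounts to flat descent and acyclicity for $\pi_{2n}\TP$ and $\pi_{2n}\THH^{tC_{p^n}}$, i.e.\ essentially for completed log prismatic cohomology), and you assert it rather than prove it. Reversing your order of reduction --- orbits by connectivity, then Tate by the norm sequence --- removes both difficulties and is the argument the cited sources use.
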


Now it is possible to reproduce the main result of \cite{BMS2}, in the logarithmic setting, due to \cite{binda2023logarithmic}.

\begin{thm}[Motivic filtrations in the log setting, after \cite{binda2023logarithmic}]
    Let $R_0$ be a perfectoid ring and $(S,Q)$ be a $p$-complete log quasisyntomic $R_0$-algebra. Then the invariants of the following square are equipped with $\Z$-indexed, complete, multiplicative, motivic filtrations $\FilM^{\bullet} (-)$:
    \begin{equation*}
        \begin{tikzcd}[column sep=huge]
            \TC^- ((S,Q);\Zp) \arrow[r, "\varphi^{hS^1}"] \arrow[d] & \TP ((S,Q);\Zp) \arrow[d] \\
            \THH ((S,Q);\Zp) \arrow[r, "\varphi"] & \THH ((S,Q);\Zp)^{tC_p}
        \end{tikzcd}
    \end{equation*}
    Passing to the $i$-th associated graded piece, one obtains a commutative diagram, whose rows represent the divided Frobenius and its graded counterpart in the setting of log prismatic cohomology:
    \begin{equation*}
        \begin{tikzcd}[column sep=huge]
            \n^{\geq n} \Delc_{(S,Q)/ \ainf} \{ n\} [2n] \arrow[r, "\varphi_n"] \arrow[d] & \Delc_{(S,Q)/ \ainf} \{ n\} [2n] \arrow[d] \\
            \n^n \Delc_{(S,Q)/ \ainf} \{ n\} [2n] \arrow[r, "\gr \varphi_n"] & \overline{\Del}_{(S,Q)/ \ainf} \{ n\} [2n]
        \end{tikzcd}
    \end{equation*}

    These motivic filtrations induced spectral sequences, which converge on the second page, and thus calculate the homotopy groups of the aforementioned invariants.
    
    Note that in the case of $(S,Q)$ being log quasiregular-semiperfectoid, all these invariants are concentrated on even homotopy groups (which identify exactly with prismatic invariants) and, hence, the motivic filtrations are identified with the double-speed Postnikov ones. In this setting, the spectral sequences of the invariants of the top row are the $S^1$-homotopy fixed points/Tate spectral sequences, respectively.
\end{thm}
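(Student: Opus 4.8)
The plan is to transport the strategy of \cite{BMS2} into the logarithmic world, using log quasisyntomic descent to reduce every claim to an explicit computation on the log quasiregular-semiperfectoid (lqrsp) basis. Since lqrsp rings equipped with log quasisyntomic covers form a basis for the topology, and since $\THH(-,-)$, $\TC^-(-,-)$, $\TP(-,-)$ and $\THH(-,-)^{tC_p}$ all satisfy descent for the homologically log flat (hence log quasisyntomic) topology by the preceding proposition, it suffices to produce a functorial filtration on the lqrsp locus and then apply log quasisyntomic sheafification. The first step I would carry out is the key \emph{evenness} statement: for $(S,Q)$ lqrsp over $R_0$, each of these four spectra is concentrated in even homotopy degrees. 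Granting this, the double-speed Postnikov (Whitehead) filtration $\tau_{\geq 2\bullet}$ is automatically an exhaustive, complete, multiplicative, $\Z$-indexed filtration of $\einfty$-algebras whose $n$-th graded piece is $\pi_{2n}(-)[2n]$, and one \emph{defines} $\FilM^{\bullet}$ on a general log quasisyntomic $(S,Q)$ by sheafifying this filtration from the lqrsp basis.

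The computational heart is the evenness in the lqrsp case. Here I would use the repletion presentation $\THH(A,M)\simeq \THH(A)\otimes_{\Sph[B^{\mathrm{cyc}}(M)]}\Sph[M\times_{M^{\mathrm{gp}}}B^{\mathrm{cyc}}(M^{\mathrm{gp}})]$. Over a perfectoid base the underlying $\THH(S;\Zp)$ is even by \cite{BMS2}, so one is reduced to controlling the contribution of the semiperfect monoid $Q$ through its cyclic bar construction; because $Q$ and $S/p$ are semiperfect, the group-completed and replete monoid rings contribute only even classes and the relative tensor product stays even-concentrated. Evenness of $\THH((S,Q);\Zp)$ then forces the homotopy fixed point, Tate, and $tC_p$ spectral sequences to degenerate, yielding evenness of $\TC^-$, $\TP$ and $\THH^{tC_p}$ simultaneously, together with a canonical identification of their even homotopy with (Nygaard-filtered, Breuil--Kisin-twisted) log prismatic cohomology $\Delc_{(S,Q)/\ainf}\{n\}$ over $\ainf(R_0)$, via the log analogue of the $\THH$--prismatic comparison.

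With evenness and the prismatic identification in hand, the graded pieces of the sheafified filtration compute derived log prismatic cohomology: on the lqrsp basis $\grM^n$ of $\TP$ and $\THH^{tC_p}$ recover $\Delc_{(S,Q)/\ainf}\{n\}[2n]$ and its Hodge--Tate graded quotient, while $\grM^n$ of $\TC^-$ and $\THH$ recover the Nygaard steps $\n^{\geq n}\Delc_{(S,Q)/\ainf}\{n\}[2n]$ and $\n^n\Delc_{(S,Q)/\ainf}\{n\}[2n]$; flat descent for log prismatic cohomology propagates these identifications to all log quasisyntomic rings. To obtain the displayed square I would track the $S^1$-equivariant cyclotomic Frobenius $\varphi_p$ (built from the repletion-induced Frobenius in the earlier proposition) through this dictionary: on even homotopy it induces precisely the divided Frobenius $\varphi_n\colon \n^{\geq n}\Delc_{(S,Q)/\ainf}\{n\}\to \Delc_{(S,Q)/\ainf}\{n\}$, and on associated gradeds its graded counterpart $\gr\varphi_n$. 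Finally, completeness and exhaustiveness of $\FilM^{\bullet}$ --- both inherited from the Postnikov filtration on the lqrsp basis and preserved under sheafification --- yield convergent spectral sequences whose $E_2$-page is read off from the graded identifications, namely the cohomology of the (Nygaard-filtered) log prismatic cohomology.

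I expect the genuine obstacle to be the evenness computation on lqrsp rings, and specifically the monoid contribution: one must show that replacing $\THH(S)$ by the repletion-corrected tensor product does not create odd homotopy. This requires a careful analysis of the cyclic bar construction $B^{\mathrm{cyc}}(Q^{\mathrm{gp}})$ and of the repletion $Q\times_{Q^{\mathrm{gp}}}B^{\mathrm{cyc}}(Q^{\mathrm{gp}})$ for semiperfect $Q$, together with a flatness input guaranteeing that the relative tensor product over $\Sph[B^{\mathrm{cyc}}(Q)]$ is computed by an even-concentrated Tor spectral sequence. Everything downstream --- descent, the prismatic identifications, the Frobenius square, and convergence --- is then formal, following the template of \cite{BMS2, binda2023logarithmic}.
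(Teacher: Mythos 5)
Your strategy --- establishing evenness on the log quasiregular-semiperfectoid basis, taking the double-speed Postnikov filtration there, then defining $\FilM^{\bullet}$ by log quasisyntomic sheafification and reading off the Frobenius square from the cyclotomic structure --- is exactly the route of \cite{binda2023logarithmic}, which the paper simply cites for this theorem without reproducing a proof; your identification of the monoid/repletion contribution to evenness as the genuine technical obstacle is also where the real work in that reference lies. The one point to flag is your claim that exhaustiveness is ``preserved under sheafification,'' which is delicate for the non-connective invariants $\TC^-$ and $\TP$ (sheafification involves a limit over the \v{C}ech nerve, which need not commute with the colimit defining exhaustiveness); the theorem as stated only asserts completeness, so this does not affect the result.
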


\section{TR with logarithmic poles and its motivic filtration}

In this section we introduce the $p$-typical $\TR^r$ with logarithmic poles and record its main properties. Then, inspired by \cite{Andri1}, we study its motivic filtration by passing to its $S^1$-homotopy fixed points.

\subsection{TR with logarithmic poles}



\begin{defn}
    Let $(A,M)$ be a pre-log $\einfty$-ring spectrum. For $1 \leq r \leq \infty$, define its \emph{$p$-typical topological restriction homology} to be the invariant obtained via the following iterated product construction:
    \begin{equation*}
        \TR^r (A,M) := \THH (A,M)^{hC_{p^{r-1}}} \times_{(\THH (A,M)^{tC_p})^{hC_{p^{r-2}}}} \dots \times_{\THH (A,M)^{tC_p}} \THH (A,M)
    \end{equation*}
    where the maps on the left are the canonical ones, while the maps on the right correspond to higher Frobenii $\varphi^{hC_{p^i}}$.

    This is equipped with Restriction, Frobenius, and Verschiebung operators, as in the non logarithmic case. Taking the limit with respect to the Restriction maps, we obtain:
    \begin{gather*}
        \TR (A,M) := \limr \TR^r (A,M)  \\
        \simeq \dots \times_{(\THH (A,M)^{tC_p})^{hC_{p^{r-1}}}} \THH (A,M)^{hC_{p^{r-1}}} \times_{(\THH (A,M)^{tC_p})^{hC_{p^{r-2}}}} \dots \times_{\THH (A,M)^{tC_p}} \THH (A,M)
    \end{gather*}
    Note that if $A$ is a connective $\einfty$-ring spectrum, then the same is also true for $\TR^r (A,M)$, $1 \leq r \leq \infty$.
\end{defn}

From now one we restrict to using $\Zp$-coefficients everywhere. Needless to say that most of the constructions that follow can be done without this restriction, but over $\Zp$ there is a substantial simplification as a result of the Tate vanishing lemma \cite{NS}.

\begin{con}
    Let $(A,M)$ be a pre-log $\einfty$-ring spectrum. Then, for $1 \leq r \leq \infty$, the iterated product construction over $\Zp$-coefficients simplifies to the following:
    \begin{equation*}
        \TR^r ((A,M);\Zp) \simeq \THH ((A,M);\Zp)^{hC_{p^{r-1}}} \times_{\THH ((A,M);\Zp)^{tC_{p^{r-1}}}} \dots \times_{\THH ((A,M);\Zp)^{tC_p}} \THH ((A,M);\Zp)
    \end{equation*}
    Applying $S^1$-homotopy fixed points, we have the following iterated product description of $TR^r ((A,M);\Zp)^{hS^1}$:
    \begin{equation*}
        \TR^r ((A,M);\Zp)^{hS^1} \simeq \TC^- ((A,M);\Zp) \times_{\TP ((A,M);\Zp)} \dots \times_{\TP ((A,M);\Zp)} \TC^- ((A,M);\Zp)        
    \end{equation*}
    This fits into the following commutative diagram:
    \begin{equation} \label{TR^r S^1-homotopy fixed points main diagram}
        \begin{tikzcd}
            \TR^r ((A,M);\Zp)^{hS^1} \arrow[r] \arrow[d] & \TC^- ((A,M);\Zp) \arrow[r, "\varphi^{hS^1}"] \arrow[d] &[20pt] \TP ((A,M);\Zp) \arrow[d] \\
            \TR^r ((A,M);\Zp) \arrow[r] & \THH ((A,M);\Zp)^{hC_{p^{r-1}}} \arrow[r, "\varphi^{hC_{p^{r-1}}}"] & \THH ((A,M);\Zp)^{tC_{p^r}}
        \end{tikzcd}
    \end{equation}

    We also have natural Restriction, Frobenius, and Verschiebung maps induced on the $S^1$-homotopy fixed points:
    \begin{equation*}
        \begin{tikzcd}
            \TR^{r+1} ((A,M);\Zp)^{hS^1} \arrow[r, "\Res^{hS^1}", "\F^{hS^1}"'] \arrow[d] & \TR^r ((A,M);\Zp)^{hS^1} \arrow[d] &[-20pt] \TR^r ((A,M);\Zp)^{hS^1} \arrow[r, "\V^{hS^1}"] \arrow[d] & \TR^{r+1} ((A,M);\Zp)^{hS^1} \arrow[d] \\
            \TR^{r+1} ((A,M);\Zp) \arrow[r, "\Res", "\F"'] & \TR^r ((A,M);\Zp) & \TR^r ((A,M);\Zp) \arrow[r, "\V"] & \TR^{r+1} ((A,M);\Zp)
        \end{tikzcd}
    \end{equation*}
\end{con}

\subsection{The motivic filtration}

Since we are working over a perfectoid base, let us recall the motivic filtration of $\TR^r$ of perfectoids.

\begin{prop}[$\TR^r$ of perfectoids, \cite{BMS2, K1localTR, Andri1}]
    Let $R_0$ be a perfectoid ring. For $1 \leq r < \infty$, the invariants of the following commutative diagram are concentrated degrees:
    \begin{equation} \label{TR^r diagram for perfectoid}
        \begin{tikzcd}
            \TR^r (R_0;\Zp)^{hS^1} \arrow[r] \arrow[d] & \TC^- (R_0;\Zp) \arrow[r, "\varphi^{hS^1}"] \arrow[d] &[20pt] \TP (R_0;\Zp) \arrow[d] \\
            \TR^r (R_0;\Zp) \arrow[r] & \THH (R_0;\Zp)^{hC_{p^{r-1}}} \arrow[r, "\varphi^{hC_{p^{r-1}}}"] & \THH (R_0;\Zp)^{tC_{p^r}}
        \end{tikzcd}
    \end{equation}
    Passing to homotopy groups, we obtain the following diagram:
    \begin{equation*}
        \begin{tikzcd}[column sep=huge, row sep=large]
            \ainf (R_0) [u_r, v_r] \arrow[r] \arrow[d, "\vartheta_r \text{-linear}", "v_r \mapsto 0"'] & \ainf (R_0) [\sigma, \sigma^{-1}] \arrow[d,"\widetilde{\vartheta}_r \text{-linear}"] \\
            \W_r (R_0)[u_r] \arrow[r] & \W_r (R_0)[\sigma, \sigma^{-1}]
        \end{tikzcd}
    \end{equation*}
    where the elements $u_r, \sigma$ live in degree $2$ and $v_r$ lives in degree $-2$.

    In this situation, the motivic filtrations of all the invariants are the double speed Postnikov ones. Regarding $\TR^r$, the $S^1$-homotopy fixed point spectral sequence degenerates and equips $\pi_0 \TR^r (R_0;\Zp)^{hS^1} \simeq \ainf (R_0)$ with the $r$-Nygaard filtration. This is no other than the $\xi_r$-adic filtration. It follows that the $n$-th associated graded terms of the motivic filtration, which are the $n$-th homotopy groups, give rise to the following commutative diagram:
    \begin{equation*}
        \begin{tikzcd}
            \xi_r^n \ainf (R_0) \{ n\} \arrow[r, "\varphi_{r, n}"] \arrow[d] & \ainf (R_0) \{ n\} \arrow[d] \\
            \xi_r^n/ \xi_r^{n+1} \ainf (R_0) \{ n\} \arrow[r] & \W_r (R_0) \{ n\}
        \end{tikzcd}
    \end{equation*}
    
    For the case $r=\infty$, the invariants $\TR (R_0;\Zp)$ and $\TR (R_0;\Zp)^{hS^1}$ are not concentrated on even degrees, in general. However, by taking the limits over Restriction maps, from the case $r<\infty$, we are able to see that these invariants are equipped with motivic filtrations, which are no other than the double-speed Postnikov ones. Passing to the associated graded pieces, we obtain the following identification:
    \begin{equation*}
        \begin{cases}
            \grM^n \TR (R_0;\Zp)^{hS^1} \simeq \rlimr \xi_r^n \ainf (R_0) \{ n\} \\[5pt]
            \grM^n \TR (R_0;\Zp) \simeq \rlimr \xi_r^n / \xi_r^{n+1} \ainf (R_0) \{ n\}
        \end{cases}
    \end{equation*}
    In particular, the even homotopy groups correspond to the $\lim$ terms, while the $\lim^1$ terms are identified with the odd homotopy groups. One particular instance for which these latter vanish, and hence the invariants are even, occurs when $R_0 = \mathcal{O}_C$, for a spherically complete $C$.
\end{prop}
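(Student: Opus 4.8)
The plan is to bootstrap everything from the computation of $\THH$, $\TC^-$ and $\TP$ of a perfectoid ring in \cite{BMS2}, and then to propagate both evenness and the identification of the Nygaard filtration through the iterated pullback formulas of the preceding Construction. First I would recall that for $R_0$ perfectoid the spectra $\THH(R_0;\Zp)$, $\THH(R_0;\Zp)^{hC_{p^i}}$, $\THH(R_0;\Zp)^{tC_{p^i}}$, $\TC^-(R_0;\Zp)$ and $\TP(R_0;\Zp)$ are all concentrated in even degrees, with $\pi_* \TC^-(R_0;\Zp) \simeq \ainf(R_0)[u,v]/(uv-\xi)$ and $\pi_* \TP(R_0;\Zp) \simeq \ainf(R_0)[\sigma^{\pm 1}]$ (where $|u|=|\sigma|=2$, $|v|=-2$), the Nygaard filtration on $\pi_0 \TC^- \simeq \ainf(R_0)$ being the $\xi$-adic one. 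The decisive input is that over a perfectoid base the divided Frobenii are surjective, indeed bijective, on every even homotopy group, since $\varphi$ is an automorphism of $\ainf(R_0)$ and the Nygaard filtration is $\xi$-adic.

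For finite $1 \le r < \infty$ I would then induct on the length of the iterated fiber product. At each node one glues even spectra $X, Y$ over an even base $Z$ along the canonical map and a divided Frobenius; since at least one of these two maps (the Frobenius) is surjective on every even homotopy group, the Milnor exact sequence
\begin{equation*}
  \pi_{2n}X \oplus \pi_{2n}Y \longrightarrow \pi_{2n}Z \longrightarrow \pi_{2n-1}(X \times_Z Y) \longrightarrow 0
\end{equation*}
forces the odd homotopy of the pullback to vanish. Hence $\TR^r(R_0;\Zp)$ and $\TR^r(R_0;\Zp)^{hS^1}$ are even, their even homotopy is the iterated fiber product of the homotopy rings above, and one reads off $\pi_* \TR^r(R_0;\Zp)^{hS^1} \simeq \ainf(R_0)[u_r,v_r]$ with $u_r v_r = \xi_r$, together with $\pi_* \TR^r(R_0;\Zp) \simeq \W_r(R_0)[u_r]$ and $\W_r(R_0) \simeq \ainf(R_0)/\xi_r$, where $\xi_r$ is the distinguished generator of the $r$-Nygaard filtration (concretely a product $\xi_r = \prod_{i=0}^{r-1}\varphi^{i}(\xi)$ of Frobenius twists, up to the conventions of \cite{Andri1}). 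Because everything in sight is even, the motivic filtration agrees with the double-speed Postnikov filtration and the $S^1$-homotopy fixed point spectral sequence degenerates; tracking the divided Frobenii through the pullback identifies the induced filtration on $\pi_0 \TR^r(R_0;\Zp)^{hS^1} \simeq \ainf(R_0)$ with the $\xi_r$-adic one, and reading off $\pi_{2n}$ yields the stated square of associated graded pieces, whose corners are $\xi_r^n \ainf(R_0)\{n\}$, its quotient $\xi_r^n/\xi_r^{n+1}\ainf(R_0)\{n\}$, and $\W_r(R_0)\{n\}$, with the horizontal divided Frobenius $\varphi_{r,n}$ an isomorphism onto $\ainf(R_0)\{n\}$.

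For $r=\infty$ I would pass to the derived limit along the Restriction maps, using $\TR(R_0;\Zp) \simeq \rlimr \TR^r(R_0;\Zp)$ and the analogous description for the $S^1$-fixed points. I would define the motivic filtration on the limit to be the inverse limit of the finite-stage double-speed Postnikov filtrations; it is again complete, exhaustive and multiplicative, and commuting $\rlimr$ past the cofiber sequences defining the graded pieces gives $\grM^n \TR(R_0;\Zp)^{hS^1} \simeq \rlimr \xi_r^n \ainf(R_0)\{n\}$ and $\grM^n \TR(R_0;\Zp) \simeq \rlimr \xi_r^n/\xi_r^{n+1}\ainf(R_0)\{n\}$. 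The Milnor sequence for the tower then exhibits the even homotopy groups as the honest $\lim$-terms and the odd homotopy groups as the $\lim^1$-terms of $\{\xi_r^n \ainf(R_0)\{n\}\}_r$, so the invariants are even precisely when these $\lim^1$-terms vanish.

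The main obstacle is twofold and concentrated in the $r=\infty$ analysis. First, one must verify that the filtration produced by the degenerate spectral sequence on the pullback is \emph{exactly} the $\xi_r$-adic filtration, and not merely an abstractly isomorphic one; this requires matching the iterated canonical and Frobenius maps with the defining product of Frobenius twists, and is where I would lean on the explicit descriptions in \cite{Andri1, K1localTR}. Second, to deduce evenness for $R_0 = \mathcal{O}_C$ with $C$ spherically complete, I would show that the tower $\{\xi_r^n \ainf(\mathcal{O}_C)\{n\}\}_r$ has vanishing $\lim^1$: here spherical completeness guarantees that $\ainf(\mathcal{O}_C)$ is complete enough for the inverse system to be Mittag--Leffler, forcing the odd homotopy groups to vanish and the invariants to be even. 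I expect this Mittag--Leffler argument, together with the careful identification of the $\xi_r$-adic filtration underpinning it, to be the technical heart of the proof.
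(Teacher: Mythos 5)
The paper itself states this proposition without proof, as a recollection from \cite{BMS2, K1localTR, Andri1}, so there is no in-text argument to compare against; your outline --- the even computation of $\THH$, $\TC^-$, $\TP$ of a perfectoid from \cite{BMS2}, a Mayer--Vietoris induction over the iterated pullbacks, and passage to the limit over Restriction maps for $r=\infty$ --- is exactly the standard route taken in the cited references, and the overall skeleton is sound. Two of the justifications you single out as decisive are, however, incorrect as stated, and both would need repair before the argument closes.

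First, the divided Frobenius $\varphi^{hS^1}\colon \pi_{2n}\TC^-(R_0;\Zp) \to \pi_{2n}\TP(R_0;\Zp)$ is \emph{not} surjective in every even degree: with the usual generators one has $\varphi(u)=\sigma$ but $\varphi(v)=\widetilde{\xi}\sigma^{-1}$, so for $n<0$ the image is the proper submodule $\widetilde{\xi}^{|n|}\ainf(R_0)\{n\}$. What makes the cokernel in your Mayer--Vietoris sequence vanish is that the \emph{canonical} map (sending $v\mapsto\sigma^{-1}$) is surjective on $\pi_{2n}$ for $n\le 0$, while the Frobenius is surjective for $n\ge 0$; the two maps into each Tate term are only \emph{jointly} surjective, degree by degree. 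Your sequence is set up to need just one surjection at a time, so the fix is cosmetic, but the stated reason ("$\varphi$ is an automorphism of $\ainf(R_0)$, hence the divided Frobenius is bijective on every even homotopy group") is false. Second, the tower $\{\xi_r^n\ainf(\mathcal{O}_C)\{n\}\}_r$ with the Restriction transition maps (which are the inclusions $\xi_{r+1}^n\ainf\subset\xi_r^n\ainf$) is essentially never Mittag--Leffler: the images strictly decrease and do not stabilize. The correct criterion for the $\lim^1$-term of a decreasing filtration $A\supset A_1\supset A_2\supset\cdots$ to vanish is surjectivity of $A\to\lim_r A/A_r$, i.e.\ completeness of $\ainf(\mathcal{O}_C)$ for the $\{\xi_r^n\}$-filtration (for $n=1$ this is surjectivity of $\ainf(\mathcal{O}_C)\to \W(\mathcal{O}_C)=\lim_r\W_r(\mathcal{O}_C)$), and it is this completeness that spherical completeness of $C$ supplies. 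The conclusion you want is true, but Mittag--Leffler is not the mechanism, and invoking it is a genuine logical gap rather than a matter of convention.
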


This result essentially uses the fact that we can go back and forth between $\TR^r (R_0;\Zp)$ and its $S^1$-homotopy fixed points. This is also true in the logarithmic world:
\begin{prop} \label{Going back to TR^r}
    Let $R_0$ be a perfectoid ring and $(A,M)$ be a pre-log $\einfty$-ring spectrum, with $A$ connective. Then, for $1 \leq r < \infty$, we have that:
    \begin{equation*}
        \begin{tikzcd}
            \TR^r ((A,M);\Zp)^{hS^1} /v_r \simeq \TR^r ((A,M);\Zp)^{hS^1} \otimes_{\TR^r (R_0;\Zp)^{hS^1}} \TR^r (R_0;\Zp) \arrow[r, "\simeq"] & \TR^r ((A,M);\Zp)
        \end{tikzcd}
    \end{equation*}
\end{prop}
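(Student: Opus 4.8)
The plan is to treat the two displayed equivalences in turn, reducing both to the explicit computation over the perfectoid base recalled in the previous proposition. Throughout, $v_r$ denotes the central element of $\pi_{-2}\TR^r(R_0;\Zp)^{hS^1}$, whose image in $\pi_{-2}\TR^r((A,M);\Zp)^{hS^1}$ under the structure map $R_0\to A$ is again written $v_r$. Since this image is pulled back from the base ring $B:=\TR^r(R_0;\Zp)^{hS^1}$, the leftmost equivalence is formal: for any $B$-module $X$ and any $v_r\in\pi_{-2}B$ one has $X/v_r\simeq X\otimes_B(B/v_r)$, where $X/v_r:=\cofib(\Sigma^{-2}X\xrightarrow{v_r}X)$. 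Applying this to $X=\TR^r((A,M);\Zp)^{hS^1}$, it remains only to identify $B/v_r\simeq\TR^r(R_0;\Zp)$.

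This base identification is immediate from the preceding proposition. There $\pi_*\TR^r(R_0;\Zp)^{hS^1}\simeq\ainf(R_0)[u_r,v_r]$ with $u_rv_r=\widetilde{\xi}_r$, where $\widetilde{\xi}_r$ generates $\ker(\vartheta_r\colon\ainf(R_0)\twoheadrightarrow\W_r(R_0))$; in particular $v_r$ is a non-zero-divisor, so $\pi_*(B/v_r)\simeq\ainf(R_0)[u_r]/(\widetilde{\xi}_r)\simeq\W_r(R_0)[u_r]\simeq\pi_*\TR^r(R_0;\Zp)$. The augmentation $\TR^r(R_0;\Zp)^{hS^1}\to\TR^r(R_0;\Zp)$ sends $v_r\mapsto 0$, hence factors through $B/v_r$, and the induced map is an isomorphism on homotopy groups and therefore an equivalence. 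Combined with the previous paragraph this proves the leftmost equivalence and identifies the middle term.

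For the rightmost equivalence one must show that the augmentation induces $\TR^r((A,M);\Zp)^{hS^1}/v_r\xrightarrow{\sim}\TR^r((A,M);\Zp)$; this is the crux. The comparison map is the one fitting into the left-hand square of \eqref{TR^r S^1-homotopy fixed points main diagram}, and I would establish that it is an equivalence by propagating the non-logarithmic statement of \cite{Andri1} along $\THH(R_0;\Zp)$-linearity. Concretely, $\TR^r((A,M);\Zp)^{hS^1}$ is a module over the even $\einfty$-ring $B$ on which $v_r$ is a non-zero-divisor, so the cofiber by $v_r$ is a flat base change; one checks on the building blocks of \eqref{TR^r S^1-homotopy fixed points main diagram} that this flat base change carries the top row $(\TC^-,\TP)$ to the bottom row $(\THH^{hC_{p^{r-1}}},\THH^{tC_{p^r}})$, and hence the iterated-pullback presentation of $\TR^r((A,M);\Zp)^{hS^1}$ to that of $\TR^r((A,M);\Zp)$.

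The main obstacle is exactly this last compatibility: the $v_r$-quotient is a finite colimit while $\TR^r$ is presented as a finite limit, and the two do not commute on formal grounds. What rescues the argument is that over the perfectoid base everything is concentrated in even degrees and $v_r$ is a non-zero-divisor, so that $B\to B/v_r$ is flat in the pertinent sense and does commute with the finite limits defining $\TR^r$ as well as with the monoid tensor factors of log $\THH$. Making this flatness precise, and verifying that the resulting identification is compatible with the Restriction, Frobenius and Verschiebung operators, is where the genuine work lies.
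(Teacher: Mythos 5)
The paper does not actually supply a proof of this proposition: it is asserted as the logarithmic analogue of the corresponding statement from \cite{Andri1}, so your proposal can only be measured against the argument implicitly being invoked. The first half of what you write is correct and is surely what is intended: the identification $X/v_r\simeq X\otimes_B(B/v_r)$ for a $B$-module $X$ is formal, and the base identification $B/v_r\simeq\TR^r(R_0;\Zp)$ follows from the displayed computation $\pi_*\TR^r(R_0;\Zp)^{hS^1}\simeq\ainf(R_0)[u_r,v_r]$ with $u_rv_r=\xi_r$, the fact that $v_r$ is a non-zero-divisor there, and the $\vartheta_r$-linear, $v_r\mapsto0$ augmentation. (Minor point: the kernel of $\vartheta_r$ is generated by $\xi_r$, not $\widetilde{\xi}_r$.)

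The second half misdiagnoses the difficulty and then appeals to a false statement to resolve it. In a stable $\infty$-category finite limits commute with finite colimits, so the cofibre of multiplication by $v_r$ passes through the iterated pullback defining $\TR^r((A,M);\Zp)^{hS^1}$ with no hypothesis whatsoever; there is no limit-versus-colimit obstruction. Conversely, the ``flatness'' you invoke is not available: $B\to B/v_r$ is the quotient by a non-zero-divisor in even degrees, which is essentially never flat (any more than $\Z\to\Z/p$ is), so an argument resting on it would not survive scrutiny --- fortunately it is also not needed. What genuinely remains after commuting $(-)/v_r$ past the pullback is the termwise identification $\TC^-((A,M);\Zp)/v_r\simeq\THH((A,M);\Zp)^{hC_{p^{r-1}}}$ and $\TP((A,M);\Zp)/v_r\simeq\THH((A,M);\Zp)^{tC_{p^r}}$, compatibly with the canonical maps and Frobenii; this requires knowing how $v_r$ restricts to each factor, and the Euler classes of the various residual circles $S^1/C_{p^i}$ differ --- this is exactly where $\xi_r$ rather than $\xi$ enters, and it is the point your sketch defers. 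The clean route, which sidesteps all of this, is to apply the natural cofibre sequence $\Sigma^{-2}Y^{hS^1}\xrightarrow{v}Y^{hS^1}\to Y$, valid for an arbitrary spectrum $Y$ with $S^1$-action, directly to $Y=\TR^r((A,M);\Zp)$ with its residual $S^1$-action: the right-hand equivalence is then immediate, $v_r$ is by construction the image of the universal Euler class $v\in\pi_{-2}\Sph^{BS^1_+}$ in $\pi_{-2}B$, and the middle term is recovered by base change exactly as in your first paragraph.
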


Now we can move on to the motivic filtrations for the case of any $p$-complete pre-log quasisyntomic ring.
\begin{proof}[Proof of Theorem \ref{thm2 motivic filtrations}]
    Let $(S,Q)$ be a pre-log quasiregular-semiperfectoid ring and let $2 \leq r < \infty$ (the case $r=1$ is already settled). Remember that in this case, the following invariants are even:
    \begin{equation*}
        \TC^- ((S,Q);\Zp), \quad \TP ((S,Q);\Zp), \quad \THH ((S,Q);\Zp), \quad \THH ((S,Q);\Zp)^{tC_{p^r}}, \; 1 \leq r < \infty
    \end{equation*}

    From the iterated pullback formula of $\TR^r ((S,Q);\Zp)^{hS^1}$ (resp. of $\TR^r ((S,Q);\Zp)$) it follows that it is equipped with a motivic filtration, which is no other than the double-speed Postnikov one. Passing to the associated graded pieces, we get a two-term complex. Its cohomology groups, which are the even/odd homotopy groups of $\TR^r ((S,Q);\Zp)^{hS^1}$ (resp. of $\TR^r ((S,Q);\Zp)$) fit in the following exact sequence of abelian groups:
    \begin{equation*}
        \begin{tikzcd}[row sep=tiny]
            0 \arrow[r] & \pi_{2n} \TR^r ((S,Q);\Zp)^{hS^1} \arrow[r] & \pi_{2n} \TC^- ((S,Q);\Zp) \times \pi_{2n} \TR^{r-1} ((S,Q);\Zp)^{hS^1} \arrow[r] & \, \\
            \, \arrow[r] & \pi_{2n} \TP ((S,Q);\Zp) \arrow[r] & \pi_{2n-1} \TC^- ((S,Q);\Zp) \times \pi_{2n-1} \TR^{r-1} ((S,Q);\Zp)^{hS^1} \arrow[r] & 0
        \end{tikzcd}
    \end{equation*}

    Recursively, we see that the even homotopy groups give rise to a filtration, that we call the \emph{$r$-Nygaard filtration on log prismatic cohomology}, which has the following iterated pullback description:
    \begin{gather*}
        \nr^{\geq n} \Delc_{(S,Q)/ \ainf} \{ n\} \simeq \\
        \n^{\geq n} \Delc_{(S,Q)/ \ainf} \{ n\} \times_{\Delc_{(S,Q)/ \ainf} \{ n\}} \dots \times_{\Delc_{(S,Q)/ \ainf} \{ n\}} \n^{\geq n} \Del_{(S,Q)/ \ainf} \{ n\}
    \end{gather*}

    Going back to $\TR^r ((S,Q);\Zp)$ via Proposition \ref{Going back to TR^r}, it follows that its even homotopy groups can be identified with the associated graded terms for the $r$-Nygaard filtration:
    \begin{equation*}
        \pi_{2n} \TR^r ((S,Q);\Zp) \simeq \nr^{\geq n} \Delc_{(S,Q)/ \ainf} \{ n\}
    \end{equation*}

    In particular, we have the following commutative diagram:
    \begin{equation*}
        \begin{tikzcd}
            \TR^r ((S,Q);\Zp)^{hS^1} \arrow[r] \arrow[d] & \TC^- ((S,Q);\Zp) \arrow[r, "\varphi^{hS^1}"] \arrow[d] &[20pt] \TP((S,Q);\Zp) \arrow[d] \\
            \TR^r ((S,Q);\Zp) \arrow[r] & \THH ((S,Q);\Zp)^{hC_{p^{r-1}}} \arrow[r, "\varphi^{hC_{p^{r-1}}}"] & \THH ((S,Q);\Zp)^{tC_{p^r}}
        \end{tikzcd}
    \end{equation*}
    This gives rise to the following commutative square, when passing to the even homotopy groups of the invariants:
    \begin{equation*}
        \begin{tikzcd}[column sep=huge]
            \nr^{\geq n} \Delc_{(S,Q)/ \ainf} \{ n\} \arrow[r, "\varphi_{r, n}"] \arrow[d] & \Delc_{(S,Q)/ \ainf} \{ n\} \arrow[d] \\
            \nr^n \Delc_{(S,Q)/ \ainf} \{ n\} \arrow[r] & \Del_{(S,Q)/ \ainf}^{\HT, r} \{ n\}
        \end{tikzcd}
    \end{equation*}
    The upper row corresponds to a divided version of the $r$-th iteration of Frobenius, while the lower row is a divided version of this.

    While the even homotopy groups (or if you wish, the even parts of the motivic filtration) behave well, the odd homotopy groups are junk terms. In \cite{Andri1}, we were able to show that in the non logarithmic setting, they vanish locally for the quasisyntomic topology, after applying the odd vanishing of \cite{BS}. We do not know how to do this in the log setting, therefore, we simply ignore the terms.
\end{proof}

\section{Hesselholt's conjectures over $\mathcal{O}_C$}







In this section, we focus on understanding the relation between log $\TR^r$ and the $p$-complete, relative log de Rham--Witt complex. Note that in what follows, we freely use the fact that the algebraic approach and the homotopy theoretic approach to log prismatic cohomology coincide. Our results are conditional on that.

\subsection{The log de Rham--Witt comparison}

In this section we restrict to working over the perfectoid base $\mathcal{O}_C$, where $C$ is an algebraically closed complete extension of $\Q_p$. Our goal is to briefly explain how the $p$-complete, relative, $r$-truncated log de Rham--Witt complex of Matsuue \cite{matsuue2017relative} relates with the $r$-Nygaard filtration. We build on the ideas of \cite{Andri1}, from the non logarithmic world, and on \cite{vcesnavivcius2019cohomology, aoki2023p}, from the logarithmic setting.

\begin{con}[Log $r$-Nygaard filtration] \label{Log $r$-Nygaard filtration}
    Let $(S,Q)$ be an integral log smooth $p$-complete log ring over $\mathcal{O}_C$. In this particular case, its log prismatic cohomology $\Delc_{(S,Q)/\ainf}$ gets identified with the log $A\Omega$ cohomology of \cite{vcesnavivcius2019cohomology}. It follows that the Nygaard filtration on the later is induced by the d\'ecalage functor with respect to the element $\xi$. Therefore, and in analogy with the results of \cite{Andri1}, the iterated pullback construction for $\nr^{\geq n} \Delc_{(S,Q)/ \ainf}$ gets identified with the d\'ecalage induced filtration $L\eta_{\xi_r}^{\geq n} A\Omega_{(S,Q)}$.
\end{con}

\begin{prop}[Relative log de Rham--Witt comparison] \label{Log de Rham--Witt comparison} Assume that we are in the setting of \cite{vcesnavivcius2019cohomology}, that is we work with a semistable $p$-adic formal scheme $\spf S$. Then as shown in \cite[Ex. 1.5]{koshikawa2020logarithmic}, its logarithmic prismatic cohomology gets identified with log $A\Omega$-cohomology. Following the previous construction \ref{Log $r$-Nygaard filtration}, the $r$-divided Frobenius lives in the following commutative square:
\begin{equation*}
    \begin{tikzcd}
        \nr^{\geq n} \Del_{(S,Q)/ \ainf} \{ n\} \arrow[rr, "\varphi_{r, n}"] \arrow[d] & & \Del_{(S,Q)/ \ainf} \{ n\} \arrow[d] \\
        \nr^n \Del_{(S,Q)/ \ainf} \{ n\} \arrow[rr] \arrow[dr, bend right=10, "\simeq"] & & \Del_{(S,Q)/ \ainf}^{\HT, r} \{ n\} \\[-20pt]
        & \Fil_n^{\conj} \Del_{(S,Q)/ \ainf}^{\HT, r} \{ n\} \arrow[ur, hook, bend right=10]
    \end{tikzcd}
\end{equation*}
The lower row factors through the conjugate filtration on $r$-Hodge--Tate cohomology, which in this case comes from the Postnikov tower:
\begin{equation*}
    \Fil_n^{\conj} \Del_{(S,Q)/ \ainf}^{\HT, r} := \tau^{\leq n} \Del_{(S,Q)/ \ainf} \otimes_{\ainf}^{\mathbb{L}} \ainf / \widetilde{\xi}_r
\end{equation*}
Passing to graded pieces, we have the following description in terms of the $p$-complete, relative log de Rham--Witt forms of Matsuue:
\begin{equation*}
    \gr_n^{\conj} \Del_{(S,Q)/ \ainf}^{\HT, r} \simeq W_r\Omega_{(S,Q)/(\mathcal{O}_C, Q_{\mathcal{O}_C})}^{n, \mathrm{cont}}
\end{equation*}
\end{prop}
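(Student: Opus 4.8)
The plan is to treat the commutative square and the conjugate-filtration factorization as formal consequences of the machinery already in place, and to concentrate all the real work on the graded identification in the final display. First I would note that the square is just the log smooth specialization of the square produced in Theorem \ref{thm2 motivic filtrations}: invoking the identification of $\Delc_{(S,Q)/\ainf}$ with log $A\Omega$-cohomology from \cite{vcesnavivcius2019cohomology, koshikawa2020logarithmic}, together with the description of $\nr^{\geq n}$ as the $L\eta_{\xi_r}$-d\'ecalage filtration from Construction \ref{Log $r$-Nygaard filtration}, the top arrow $\varphi_{r,n}$ is the $r$-fold divided Frobenius and the vertical maps are the evident reductions. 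The claim that the bottom row factors through $\Fil_n^{\conj}$, and that this conjugate filtration coincides with the Postnikov filtration $\tau^{\leq n}\Delc_{(S,Q)/\ainf} \otimes^{\mathbb{L}}_{\ainf}\ainf/\widetilde{\xi}_r$, is the log analogue of the usual compatibility between the Nygaard and conjugate filtrations: the divided Frobenius carries the $r$-Nygaard filtration $\nr^{\geq n}$ into the $n$-th conjugate step, and on Hodge--Tate cohomology the conjugate filtration is canonically the truncation filtration. So the heart of the matter is the last equivalence.

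For the graded identification I would proceed in two stages. At the bottom level $r=1$, the log Hodge--Tate comparison of \cite{vcesnavivcius2019cohomology} identifies $\gr_n^{\conj}\Del^{\HT,1}_{(S,Q)/\ainf}$ with the continuous log differential forms $\Omega^{n,\mathrm{cont}}_{(S,Q)/(\mathcal{O}_C, Q_{\mathcal{O}_C})}$ (up to the Breuil--Kisin twist and shift visible in the square), the $d\log$ contributions of $Q$ entering exactly as in the semistable case. For general $r$ the crucial input is the perfectoid presentation $\W_r(\mathcal{O}_C) \simeq \ainf/\widetilde{\xi}_r$ of the truncated Witt vectors, which turns the reduction $-\otimes^{\mathbb{L}}_{\ainf}\ainf/\widetilde{\xi}_r$ into base change along the ghost map $\ainf \to \W_r(\mathcal{O}_C)$. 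Under this base change the conjugate graded pieces of $\Del^{\HT,r}_{(S,Q)/\ainf}$ should assemble, level by level in the Witt length, into the relative log de Rham--Witt forms. This is the logarithmic upgrade of the de Rham--Witt comparison: the non-logarithmic statement is carried out in \cite{Andri1}, and its log counterpart rests on \cite{aoki2023p}.

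The remaining and, I expect, hardest step is to promote this level-wise computation to an equivalence of de Rham--Witt complexes respecting all the structure. Concretely, the homotopy-theoretic Restriction, Frobenius, and Verschiebung operators coming from the $\TR^r$-formalism of diagram \eqref{TR^r S^1-homotopy fixed points main diagram} must be matched with the algebraic $\Res$, $\F$, $\V$ acting on Matsuue's universal log $\F$-$\V$-procomplex. The cleanest route is to verify that the graded system $\gr_n^{\conj}\Del^{\HT,r}_{(S,Q)/\ainf}$, with the operators induced from the diagram, satisfies the universal property defining $W_r\Omega^{\bullet,\mathrm{cont}}_{(S,Q)/(\mathcal{O}_C, Q_{\mathcal{O}_C})}$ in \cite{matsuue2017relative}; the universal property then yields a canonical comparison map, which the level-wise computation shows to be an isomorphism. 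The genuinely delicate part is the bookkeeping of the $d\log$ generators under Frobenius and Verschiebung, and checking that the homotopy-theoretic operators act on them as Matsuue's relations demand --- this is precisely where the logarithmic theory departs from the non-logarithmic one of \cite{Andri1}.
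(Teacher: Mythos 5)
Your proposal follows essentially the same route as the paper's proof sketch: both reduce the statement to the strategy of \cite[Sec.~11]{BMS1} (in its logarithmic form after \cite[Sec.~4]{aoki2023p}), namely constructing a $p$-complete log $\F$-$\V$-procomplex out of the conjugate/d\'ecalage graded pieces, obtaining a canonical comparison map from Matsuue's universal object, and verifying it is an isomorphism by a local computation. One small imprecision: the constructed graded system does not itself ``satisfy the universal property'' --- it is an object of the category of log $\F$-$\V$-procomplexes and therefore \emph{receives} the canonical map from the universal one --- but your subsequent sentence makes clear this is the argument you intend.
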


\begin{proof}[Proof sketch]
    The idea is contained in \cite[Sec. 11]{BMS1} in the non-logarithmic case and in \cite[Sec. 4]{aoki2023p} in the logarithmic case. In particular, taking quotients with the elements $\xi_r$ and using the generalities of the d\'ecalage functor and the Beilinson $t$-structure, one is able to construct a $p$-complete log $\F$-$\V$-procomplex. The identification with the $p$-complete, relative de Rham--Witt forms reduces to a local computation, as in loc. cit.
    
    The Bockstein differential gives rise to the entire complex, which identifies with the $r$-truncated, $p$-complete, log relative de Rham--Witt complex, again via generalities on the d\'ecalage functor. It follows that the $n$-th associated graded of the $r$-Nygaard filtration identify with the $n$-th filtered term of the log $r$-Hodge--Tate cohomology, thus making the diagram above commutative.

    Note that this argument should generalize to the settings of log $A\Omega$-cohomology of \cite{diao2024logarithmic} and/or that of log prismatic cohomology \cite{koshikawa2020logarithmic, koshikawa2023logarithmic}, when working over a perfect prism, in analogy with the non logarithmic case of \cite{molokov2020prismatic}.
\end{proof}

\subsection{$\TR^r$ of smooth rings over $\mathcal{O}_C$}

In this section we restrict our focus on phenomena related to Theorem \ref{thm1 Hesselholt conjectures}.

\begin{con}[Hesselholt--Madsen fibre sequences \cite{hesselholt2003k, hesselholt2004rham}]
    Let $X$, $U$, and $Y$ be as in the statement of Theorem \ref{thm1 Hesselholt conjectures}. In their work \cite{hesselholt2003k, hesselholt2004rham}, Hesselholt--Madsen construct a fibre sequence for topological restriction homology, involving log $\TR^r$ on the cofibre term. Moreover, this receives a trace map from the localization sequence of algebraic $\K$-theory, as follows:
    \begin{equation*}
        \begin{tikzcd}[column sep=huge]
            \K (Y) \arrow[r, "i^!"] \arrow[d] & \K (X) \arrow[r, "j_*"] \arrow[d] & K (U) \arrow[d] \\
            \TR^r (Y) \arrow[r, "i^!"] & \TR^r (X) \arrow[r, "j_*"] & \TR^r ((X,U))
        \end{tikzcd}
    \end{equation*}
    Note that a priori, the definition of logarithmic structures in $\TR^r$ by Hesselholt--Madsen is different from the one that we follow here. The discussion on log $\THH$ presented in \cite{hesselholt2019arbeitsgemeinschaft} fixes this issue.

    Passing to the limit with respect to the Restriction maps and taking the fixed points of Frobenius to form topological cyclic homology, yields the following:
    \begin{equation*}
        \begin{tikzcd}[column sep=huge]
            \K (Y) \arrow[r, "i^!"] \arrow[d] & \K (X) \arrow[r, "j_*"] \arrow[d] & K (U) \arrow[d] \\
            \TC (Y) \arrow[r, "i^!"] & \TC (X) \arrow[r, "j_*"] & \TC ((X,U))
        \end{tikzcd}
    \end{equation*}
\end{con}

Working with $\Zp$ coefficients and with $p$
-complete objects, we have that the motivic filtration of algebraic $\K$-theory is identified, locally with respect to the quasisyntomic topology, identified with the double-speed Postnikov one \cite[Cor. 14.2]{BS}, \cite[Ex. 1.6]{BM}. Therefore, in the first two columns in the above diagram above, this filtration on algebraic $\K$-theory is \'etale locally identified with that coming from $\TC$, under the trace map. Hence, the same happens for the third column, as well.

Arguing as in \cite[Thm. 1.5]{Andri1}, we have the following corollary:

\begin{cor}[Odd vanishing] \label{Odd vanishing}
    The odd parts of the associated graded pieces for the motivic filtration $\grM^{\bullet, \mathrm{odd}} \TR^r ((X,U);\Zp)$ vanish locally for the \'etale topology.
\end{cor}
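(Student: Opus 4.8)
The plan is to deduce the logarithmic odd vanishing from the non-logarithmic case by running the Hesselholt--Madsen localization fibre sequence
\[
\TR^r(Y) \xrightarrow{i^{!}} \TR^r(X) \xrightarrow{j_*} \TR^r((X,U))
\]
against the $\K$-theory localization sequence via the cyclotomic trace. The decisive feature is that the two outer columns are \emph{non-logarithmic}: $Y$ is a smooth $k$-scheme and $X$ a $p$-adic formal scheme, each with trivial log structure. Hence $\grM^{\bullet,\mathrm{odd}}\TR^r(Y)$ and $\grM^{\bullet,\mathrm{odd}}\TR^r(X)$ vanish \'etale-locally by the non-logarithmic analysis of \cite{Andri1} combined with the \'etale-local identification of the $\K$-theory filtration recalled above. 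The logarithmic term then appears as a cofibre of two objects whose odd graded pieces we already control.

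First I would verify that the motivic filtrations are compatible with the localization sequence. Since $\FilM^{\bullet}$ is functorial and multiplicative and the Hesselholt--Madsen sequence is natural, the maps $i^{!}$ and $j_*$ are filtered and assemble into a fibre sequence of filtered spectra; applying $\grM^n$ yields a cofibre sequence of two-term complexes. Its long exact sequence in cohomology, together with the vanishing of the odd parts on the two outer columns, identifies
\[
\grM^{n,\mathrm{odd}}\TR^r((X,U);\Zp) \simeq \ker\!\big(i^{!}\colon \grM^{n,\mathrm{even}}\TR^r(Y;\Zp) \to \grM^{n,\mathrm{even}}\TR^r(X;\Zp)\big).
\]
Thus the entire statement reduces to the injectivity of the even Gysin map between the prismatic even graded pieces.

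To obtain this injectivity I would feed in the $\K$-theoretic side. By \cite[Cor.~14.2]{BS} and \cite[Ex.~1.6]{BM} the motivic filtration on $\K(-;\Zp)$ is \'etale-locally the double-speed Postnikov filtration, so all three terms $\K(Y)$, $\K(X)$, $\K(U)$ are even locally; a fibre sequence of even objects has an injective map on the surviving even homotopy, so the even $\K$-theoretic Gysin map is injective. Because the cyclotomic trace is a map of filtered localization sequences and intertwines the two Gysin maps by functoriality, this injectivity is transferred to the even Gysin map for $\TR^r$, forcing the kernel above to vanish. A concluding \'etale descent argument then promotes the pointwise vanishing to the asserted \'etale-local statement.

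The main obstacle I anticipate is precisely this last transfer step. The trace $\K \to \TR^r$ is not a filtered equivalence---its graded pieces are syntomic rather than Nygaard-filtered prismatic cohomology---so one cannot simply invoke two-out-of-three to carry evenness across the columns. One must instead argue at the level of the even graded pieces, exploiting the naturality square relating the $\K$-theoretic and $\TR^r$-theoretic Gysin maps together with the \'etale-local agreement of the filtrations on the non-logarithmic columns (equivalently, one may replace this transfer by a direct purity argument showing the prismatic Gysin map for the divisor $Y \subset X$ is injective). Establishing this compatibility---and, underlying it, that the localization sequence is genuinely a sequence of \emph{filtered} objects---is where the real work lies.
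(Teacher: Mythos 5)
Your proposal uses the same raw inputs as the paper (the Hesselholt--Madsen cofibre sequence, the trace from the $\K$-theoretic localization sequence, the non-logarithmic odd vanishing from \cite{Andri1}), but the central reduction mis-indexes the long exact sequence, and the step you then need to close the argument does not go through. With the conventions of Theorem \ref{thm2 motivic filtrations}, $\grM^n$ is \'etale-locally $\tau_{[2n-1,2n]}\TR^r$, so the odd part sits in degree $2n-1$, \emph{below} the even part. Since $\TR^r((X,U))$ is the \emph{cofibre} term, the relevant portion of the long exact sequence for $\grM^n\TR^r(Y)\to\grM^n\TR^r(X)\to\grM^n\TR^r((X,U))$ is
\[
\pi_{2n-1}\grM^n\TR^r(X)\longrightarrow \pi_{2n-1}\grM^n\TR^r((X,U))\longrightarrow \pi_{2n-2}\grM^n\TR^r(Y),
\]
and both outer terms vanish \'etale-locally: the left one by the non-logarithmic odd vanishing, the right one because $\grM^n\TR^r(Y)$ is \'etale-locally concentrated in degrees $[2n-1,2n]$ (indeed in degree $2n$). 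So once filtered compatibility of the localization sequence is granted, the odd part of the logarithmic term dies immediately. The group $\ker\bigl(i^!\colon\grM^{n,\mathrm{even}}\TR^r(Y)\to\grM^{n,\mathrm{even}}\TR^r(X)\bigr)$ controls $\pi_{2n+1}$ of the cofibre, which is a different group and is not the odd part of the motivic graded piece; your identification has the connecting map pointing the wrong way. (Injectivity of the even Gysin map is a \emph{consequence} of the two filtrations agreeing, not an input to odd vanishing.)

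Even setting the indexing aside, the device you propose for establishing that injectivity --- transferring it from $\K$-theory along the cyclotomic trace ``by functoriality'' --- is not a valid inference: a commutative square in which one horizontal arrow is injective says nothing about the other horizontal arrow unless the vertical arrows are controlled, and the trace is far from an equivalence on graded pieces. You flag this yourself as the main obstacle, but the outline does not overcome it, so as written the proposal is not a proof. The paper's route is different and shorter: it observes that the motivic filtration on $\K(Y)$ and $\K(X)$ is \'etale-locally the double-speed Postnikov one by \cite[Cor.~14.2]{BS} and \cite[Ex.~1.6]{BM}, that under the trace this identifies the filtrations on the first two columns of the localization square with those coming from $\TC$ (hence from $\TR^r$), and that the third column then inherits the same \'etale-local identification, after which one argues as in \cite[Thm.~1.5]{Andri1}. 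Your insistence that one must actually verify the Hesselholt--Madsen sequence to be a sequence of \emph{filtered} spectra is a fair and worthwhile point, which the paper treats only implicitly.
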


Now we can apply these in the setting of Hesselholt's conjectures over $\mathcal{O}_C$.

\begin{proof}[Proof sketch for Theorem \ref{thm1 Hesselholt conjectures}]
    The motivic filtration and spectral sequence claims follow by applying Theorem \ref{thm2 motivic filtrations} together with Corollary \ref{Odd vanishing}. The claim regarding the $p$-complete, relative log de Rham--Witt forms follow by application of Proposition \ref{Log de Rham--Witt comparison}.
    
    Finally, regarding the fibre sequence, its existence follows by taking the fibre of the equalizer of the identity and Frobenius on log $\TR$, yielding log $\TC$. After \cite{BMS2}, syntomic cohomology comes as $p$-adic \'etale motivic cohomology and, hence, the left term is identified as such from the localization sequence of algebraic $\K$-theory mapping to the log $\TC$ fire sequence (see the relevant discussion after \cite[Theorem 1.5.6]{hesselholt2003k}). The identification of syntomic cohomology with the \'etale Tate twists comes, for example, from the main results of \cite{BM, bouis2023cartier}.
\end{proof}

\printbibliography
\end{document}